\title{Iterative Bregman Projections\\for Regularized Transportation Problems}
\author{Jean-David Benamou\footnote{INRIA, MOKAPLAN, \texttt{jean-david.benamou@inria.fr}, \texttt{luca.nenna@inria.fr}} 
		\quad
		Guillaume Carlier\footnote{Ceremade, Universit\'e Paris-Dauphine, \texttt{\{carlier,peyre\}@ceremade.dauphine.fr}}
		\quad
		Marco Cuturi\footnote{Kyoto University, \texttt{mcuturi@i.kyoto-u.ac.jp}} \\
		Luca Nenna$^*$
		\quad
		Gabriel Peyré$^\dagger$}
\date{\today}
\renewcommand{\pi}{\gamma}
\renewcommand{\ga}{\epsilon}
\begin{document}

\maketitle



\begin{abstract}
	This article details a general numerical framework to approximate solutions to linear programs related to optimal transport. 
	The general idea is to introduce an entropic regularization of the initial linear program. This regularized problem corresponds to a  Kullback-Leibler Bregman divergence projection of a vector (representing some initial joint distribution) on the polytope of constraints. We show that for many problems related to optimal transport, the set of linear constraints can be split in an intersection of a few simple constraints, for which the projections can be computed in closed form. This allows us to make use of iterative Bregman projections (when there are only equality constraints) or more generally Bregman-Dykstra iterations (when  inequality constraints are involved). We illustrate the usefulness of this approach to several variational problems related to optimal transport: barycenters for the optimal transport metric, tomographic reconstruction, multi-marginal optimal transport and in particular its application to Brenier's relaxed solutions of incompressible Euler equations, partial un-balanced optimal transport and optimal transport with capacity constraints. 
\end{abstract}


\section{Introduction}

\subsection{Previous Works}
\label{sec-pw}
The theory of Optimal Transport (OT)~\cite{Villani03} defines a natural and useful geometry to compare measures supported on metric probability spaces. Its modern formulation as a linear program is due to Kantorovich~\cite{Kantorovich42}. 
OT has recently found a flurry of applications in various fields such as computer vision~\cite{RubTomGui00}, economy~\cite{MR2564439}~\cite{carlierekelandmatching}, computer graphics~\cite{Bonneel-displacement}, image processing~\cite{2014-xia-siims}, astrophysics~\cite{FrischNaturee}. 

\paragraph{Computational optimal transport.}

A major bottleneck that prevents the widespread of OT and its various generalizations is the lack of fast (possibly approximate) algorithms. 
Discrete optimal transport (\emph{i.e.} computing transport between sums of Diracs) reduces to a finite dimensional linear program. When the mass of each Dirac is constant and the two measures have the same number $N$ of Dirac masses, this problem reduces to an optimal matching problem, for which dedicated discrete optimization methods exist~\cite{Burkard09}, that roughly have $O(N^3)$ complexity, which is still computationally too demanding for most applications. 
Another line of research, initiated by~\cite{Benamou2000} relies on dynamic formulations, which corresponds to computing the transport as a geodesic, and can be re-casted as a convex optimization problem. We refer to~\cite{FPapPeyOud13} for an overview of several proximal optimization methods to tackle this problem. This requires adding an extra dimension (time variable along the geodesic) and is thus also computationally expensive. 
Semi-discrete optimal transport, i.e. optimal transport from a density to a  weighted sum of dirac masses is a classical strategy for a generalised version of the Monge-Amp\`ere equation, see \cite{Merigot:2011js} for recent improvements of this approach. Finally and for the quadratic ground cost optimal transport, a direct Newton solver approach to the non-linear Monge Amp\`ere equation can be used to compute density to density optimal transport. This holds under some regularity assumption on the densities and domain and hence on the transport map itself, see  \cite{Loeper:2005fn} and \cite{Benamou:2014jw} for instance.

\paragraph{Entropic regularization.}

A different approach consists in computing  a regularized version of the OT problem.  An interesting choice consists in penalizing the entropy of the joint coupling. This idea can be traced  back to Schrodinger~\cite{Shrodinger31} and can be related to the the so-called iterative proportional fitting procedure (IPFP)~\cite{DemingStephanIPFP} which has found numerous applications in the probability and statistics literature. We refer to~\cite{RuschendorfThomsen,LeonardShrodinger} for modern perspectives on this problem. Such a regularization also appears in the economy literature, where OT theory can be useful to predict flows of commodities or people in a market. In that context, regularizing the OT problem can also ensure the smoothness of such flows~\cite{wilson1969use,erlander1990gravity} or facilitate inference in matching models~\cite{Galichon-Entropic}. Such a regularization was also recently introduced in~\cite{CuturiSinkhorn} where it is shown that, in addition to favorable computational properties (parallelization, quadratic complexity) detailed below, such a regularization also yields a distance between histograms that can perform better in classification tasks than the usual OT distances.

The underlying idea of an entropic regularization is that entropy forces the solution to have a spread support, thus deviating from the fact that optimal couplings are sparse (\emph{i.e.} supported on a graph of a transport plan solving Monge's problem). A first impact of this regularization is that this non-sparsity of the solution helps to stabilize the computation. This can be related to the fact that entropic penalization defines a strongly convex program (as opposed to the initial OT problem) with a unique solution. 

Another (even more important) advantage of this entropic regularized OT problem is that its solution is a diagonal scaling of $e^{-C}$, the element-wise exponential matrix of $-C$, where $C$ is the ground cost defining the transport (see Section~\ref{sec-entropic-regul} for more details). The solution to this diagonal scaling problem can be found efficiently through the IPFP iterative scheme~\cite{DemingStephanIPFP}. This algorithm was later studied in detail by Sinkhorn in~\cite{Sinkhorn64,SinkhornKnopp67,Sinkhorn67} and its convergence proof was extended to continuous measures in~\cite{Ruschendorf95}.  

Entropic regularization of linear programs also shares some connection with interior point methods. These approaches make use of a $\log$-barrier function, which should be self-concordant to ensure a polynomial complexity for a given accuracy~\cite{NesterovNemirovskyBook}. Such a property does not hold for the entropic barrier, so it is not a competitive approach when it comes to approximating solutions of the original linear program by lowering the amount of regularization. As we advocate in this present paper, entropic regularization has however several other computational advantages (in particular because of its close connection with Kullback-Leibler projections), which makes it attractive when a slight amount of smoothing is acceptable in the computed approximation. 

\paragraph{Optimization using the Kullback-Leibler Divergence.}

When considering optimization over the simplex or the cone of positive vectors, it makes sense to replace the usual Euclidean metric by a divergence that quantifies with more relevance the difference between two vectors. Of particular interest for our work is the Kullback-Leibler (KL) divergence, since it is intimately related to entropic regularization. The simplest algorithmic block that can exploit such a divergence is the iterative projection on affine subsets of such cones under the KL divergence, which was introduced by Bregman~\cite{bregman1967relaxation}. Computing the projection on the intersection of generic convex sets requires to replace iterative projections by more complicated algorithms, such as for instance Dykstra's method~\cite{Dykstra83}. This algorithm is extended to Bregman divergences (such as KL) in~\cite{CensorReich-Dykstra} and a proof of convergence is given in~\cite{bauschke-lewis}. 
For references in probability and statistics that also address the case of continuous distributions see~\cite{csis,dyk, Ruschendorf95,bhatt2006}. 
Note that several other proximal algorithms have been extended to this setting~\cite{BauschkeCombettes-Dykstra}.

\paragraph{OT Barycenter.}

The OT metric has been extended in many ways. A natural extension is to consider the barycenter between several distributions (the case of only 2 measures defining the usual transport). Such a barycenter is defined as the solution of a convex variational problem (a weighted sum of OT distances) over the space of measures, which is studied in details in~\cite{Carlier_wasserstein_barycenter}. OT barycenters find applications for instance in statistics to define a mean empirical estimator from a family of observed histograms~\cite{BigotBarycenter}, or in machine learning~\cite{CuturiBarycenter} to provide an extended definition of $k$-means clustering and compute average histograms-of-features under the OT metric.

Solving this variational problem is challenging. Two recent numerical works have addressed this problem:~\cite{CuturiBarycenter} where a gradient descent on an entropic smoothing of OT distances is used and~\cite{Carlier-NumericsBarycenters} which is based on a dual formulation and tools from non-smooth optimization and computational geometry.

This barycenter problem can be extended to more complicated variational problem, such as for instance the Wasserstein propagation~\cite{Solomon-ICML}. Note that similar entropic regularization technics can be applied as well to this problem.

\paragraph{Multi-marginal transport.}

The OT barycenter problem, as introduced in~\cite{Carlier_wasserstein_barycenter}, is essentially equivalent to a multi-marginal optimal transport with quadratic cost as studied in~\cite{gansw}. Multimarginal reformulations are not usually not tractable, since they involve an optimization problem whose size grows exponentially with respect to the number of marginals.  Fortunately, the special structure of the OT barycenter problem leads to linear reformulations that are linear in the number of marginals,  see~\cite{Carlier-NumericsBarycenters} and Section~\ref{sec-barycenters} below. There are however applications where the problem under study intrinsically has a multi-marginal structure, that cannot be factorized as barycenter computations. 

Multi-marginal Optimal Transport,~\cite{MR2875651,Pass2}, is a natural extension of Optimal Transport with many 
potential fields of applications~: Economics~\cite{MR2564439}~\cite{carlierekelandmatching}, Density Functionnal Theory in Quantum Chemistry ~\cite{CPA:CPA21437}. The first important instance of multi-marginal transport was probably Brenier's generalised solutions of the Euler equations for incompressible fluids~\cite{BrenierEulerAMS,BrenierEulerARMA, BrenierEulerCPAM} which are clearly described in his review paper~\cite{BrenierGeneralized}.
Note that entropic regularization of the multimarginal transport problem leads to a problem of multi-dimensional matrix scaling~\cite{FranklinLorentz,BapatMultidim,RaghavanMultidim}.


\subsection{Contributions}

In this paper, we present a unified framework to numerically solve entropic approximations of several generalized optimal transport problems. This framework corresponds to defining appropriate entropic penalizations of the initial linear programs. The key idea is then to interpret the corresponding problems as projections of some input Gibbs density on an intersection of convex sets according to the Kullback-Leibler divergence. This problem can then be solved efficiently using either Bregman iterative projection (for intersection of affine spaces) or a more general Dykstra-like algorithm---these being well-known first order non-smooth optimization schemes. 
We investigate in details the applications of these ideas to several generalized OT problems: barycenter (Section~\ref{sec-barycenters}), tomographic reconstruction (Section~\ref{sec-tomography}), multi-marginal transport (Section~\ref{sec-multi-marginal}), partial transport (Section~\ref{sec-partial-ot}) and capacity constrained transport (Section~\ref{sec-capacity-ot}).
The code implementing the methods presented in this article can be found online\footnote{\url{https://github.com/gpeyre/2014-SISC-BregmanOT}}.


\subsection{Computational Speed}

The goal of this paper is to present a new class of efficient methods to provide approximate solutions to linear program generalizing OT. It is however important to realize that these methods become numerically unstable when the regularization parameter (denoted $\ga$ in the following) is small for two reasons: \emph{(i)} since some of the quantities manipulated in the proposed algorithms have an order of $e^{-1/\ga}$ (in particular Gibbs distributions denoted $\xi$ in the following) they become smaller than machine precision whenever the regularization $\ga$ is small; \emph{(ii)} more importantly, even if the first issue is taken care of by carrying out computations in the log domain, the convergence speed of iterative projection methods degrades significantly as $\ga \rightarrow 0$. We observe therefore that these methods are competitive in a range where the regularization term $\ga$ cannot be too small, and for which computed solutions exhibit a small amount of smoothing, see for instance Figure~\ref{fig-ot-sinkhorn} for a visual illustration of this phenomenon. 
It is thus not the purpose of this article to compare these new methods with more traditional ones (such as interior points or simplex), because they do not target the same problem.
Let us however single out the work of~\cite{CuturiBarycenter}, that solves the regularized barycenter problem described in Section~\ref{sec-barycenters} using a gradient descent scheme. In all our numerical experiments, we found however that the iterative Bregman projection converge with substantially computational effort than this gradient descent and, because they rely on alternate projections, do not require adjusting gradient step-sizes and are thus easier to deploy.

\subsection{Notations}

We denote the simplex in $\RR^N$
\eq{
	\Si_{N} \eqdef \enscond{ p \in \RR_+^N }{ \sum_i p_i = 1 }.
}
The polytope of couplings between $(p,q) \in \Si_N^2$ is defined as
\eq{
	\Pi(p,q) \eqdef \enscond{\pi \in \RR_+^{N \times N}}{ \pi \ones = p, \transp{\pi} \ones = q },
}
where $\transp{\pi}$ is the transpose of $\pi$ and $\ones \eqdef \transp{(1,\ldots,1)} \in \RR^N$.

For a set $\Cc$, we denote $\iota_\Cc$ its indicator, that is
\eq{
	\foralls x, \quad \iota_{\Cc}(x) = \choice{
		0 \qifq x \in \Cc, \\
		+\infty \quad\text{otherwise.}
	}
}

For $\pi \in \RR^{N \times N}$ for some $N > 0$, we define its entropy as
\eq{
	E(\pi) \eqdef -\sum_{i,j=1}^N \pi_{i,j} ( \log(\pi_{i,j}) - 1) + \iota_{\RR^+}(\pi_{i,j}), 
}
which is a concave function, where we used the convention $0\log(0)=0$.

The Kullback-Leibler divergence between $\pi \in \RR_+^{N \times N}$ and $\xi \in \RR_{++}^{N \times N}$ (i.e. $\xi_{i,j}>0$ for all $(i,j)$) is 
\eq{
	\KLdiv{\pi}{\xi} \eqdef \sum_{i,j=1}^N \pi_{i,j} \pa{ \log\pa{ \frac{\pi_{i,j}}{\xi_{i,j}} } - 1}.
}
With a slight abuse of notation, we extend these definitions for higher $d$-dimensional tensor arrays by replacing the sum over indices $(i,j)$ by sums over higher dimension indices. 

Given a convex set $\Cc \subset \RR^{N\times N}$, the projection according to the Kullback-Leibler divergence is defined as
\eq{
	\KLproj_\Cc(\xi) \eqdef \uargmin{ \pi \in \Cc } \KLdiv{\pi}{\xi}.
}

For vectors $(a,b) \in \RR^N \times \RR^N$, we denote entry-wise multiplication and division
\eql{\label{eq-entrywise}
	a \odot b \eqdef (a_i b_i)_i \in \RR^N
	\qandq
	\frac{a}{b} \eqdef (a_i/b_i)_i \in \RR^N.
}


\section{Iterative Bregman Projections and Dykstra Algorithm}

In this paper, we focus on regularized generalized OT problems that can be re-cast in the form
\eql{\label{proj-inter}
	\min_{\pi \in \Cc}  \KLdiv{\pi}{\xi}
}
where $\xi$ is a given point in $\RR_+^{N\times N}$, and $\Cc$ is an intersection of closed convex sets
\eq{
	\Cc = \bigcap_{\ell=1}^L \Cc_\ell
}
such that $\Cc$ has nonempty intersection with $\RR_+^{N\times N}$. 

In the following, we extend the indexing of the sets by $L$-periodicity, so that they satisfy
\eq{
	\foralls n \in \NN, \quad \Cc_{n+L} = \Cc_{n}.
}

\subsection{Iterative Bregman Projections}
\label{sec-iterative-bregman}

In the special case where the convex sets $\Cc_\ell$ are  affine subspaces (note that nonnegativity constraints are already in the definition of the entropy), it is possible to solve~\eqref{proj-inter} by simply using iterative KL projections. Starting from $\pi^{(0)} = \xi$, one computes
\eql{\label{eq-iter-bregmanproj}
	\foralls n>0, \quad
	\iter{\pi} \eqdef \KLproj_{\Cc_n}( \pi^{(n-1)} ). 
}
One can then show that $\iter{\pi}$ converges towards the unique solution of~\eqref{proj-inter},
\eq{  
	\iter{\pi} \to \KLproj_\Cc(\xi) \quad \mbox{as $n \to \infty$}.
}
see~\cite{bregman1967relaxation}.

\subsection{Dykstra's Algorithm}

When the convex sets $\Cc_\ell$ are not affine subspaces, iterative Bregman projections do not converge in general to the KL projection on the intersection. In contrast, Dykstra's algorithm~\cite{Dykstra83}, extended to the KL setting, does converge to the projection, see~\cite{bauschke-lewis}.

Dykstra's algorithm starts by initializing  
\eq{
	\pi^{(0)} \eqdef \xi
	\qandq
	q^{(0)}=q^{(-1)}= \dots =q^{(-L+1)} \eqdef \ones.
}
One then iteratively defines
\eql{\label{eq-iter-dystra}
	\iter{\pi} \eqdef  \KLproj_{\Cc_n}(\pi_{n-1} \odot q_{n-L}),
	\qandq
	\iter{q} \eqdef q^{(n-L)} \odot \frac{ \pi^{(n-1)} }{ \iter{\pi} }.
}
Recall here that $\odot$ and $\frac{\cdot}{\cdot}$ denotes entry-wise operations, see~\eqref{eq-entrywise}.

Dykstra algorithm converges to the solution of~\eqref{proj-inter}
\eq{  
	\iter{\pi} \to \KLproj_\Cc(\xi) \quad \mbox{as $n\to \infty$},
}
see~\cite{bauschke-lewis}.



\section{Entropic Regularization of Transport-like Problems}

\subsection{Entropic Optimal Transport Regularization}
\label{sec-entropic-regul}

To illustrate the class of methods developed in this paper, we first review a classical approach to optimal transport approximation, that we recast in the language of Kullback-Leibler projections. This allows us to recover well known results, but in a framework that is easily generalizable.

Following many previous works (see Section~\ref{sec-pw} for details) we consider the following discrete regularized transport
\eql{\label{eq-regul-transport}
	W_\ga(p,q) \eqdef \umin{\pi \in \Pi(p,q)} \dotp{C}{\pi} - \ga E(\pi).
}
The intuition underlying this regularization is that it enforces the optimal coupling $\pi_\ga^\star$ solution of~\eqref{eq-regul-transport} to be smoother as $\ga$ increases. This regularization also yields favorable computational properties since problem~\eqref{eq-regul-transport} is $\ga$-strongly convex. Its unique solution $\pi_\ga^\star$ can be obtained through elementary operations (matrix products, elementwise operations on matrices and vectors) as detailed below. If the optimal solution $\pi^\star$ of the (original, non-regularized, i.e. $\ga=0$) optimal transport problem is unique, then the optimal solution $\pi_\ga^\star$ of~\eqref{eq-regul-transport} converges to $\pi^\star$ as $\ga \rightarrow 0$. When other optimal solutions exist, $\pi_\ga^\star$ converges as $\ga \rightarrow 0$ to that with the largest entropy among those, again denoted $\pi^\star$. The convergence of minimizers of the regularized problem as $\ga\to 0$ is actually exponential
\eq{
	\norm{\pi_\ga^\star-\pi^\star}_{\RR^{N\times N}} \le M e^{-\la/\ga}
} 
where $\la$ and $M$ depend on $C$, $p$, $q$ and $N$   as  shown by Cominetti and San~Martin~\cite{CominettiAsympt}. 

Problem~\eqref{eq-regul-transport} can be re-written as a projection 
\eql{\label{eq-regul-ot-kl}
	W_\ga(p,q) = \ga \umin{\pi \in \Pi(p,q)} \KLdiv{\pi}{\xi}
	\qwhereq
	\xi = e^{ -\frac{C}{\ga} }
}
of $\xi$ according to the Kullback-Leibler divergence (here the exponential is computed component-wise). 

Problem~\eqref{eq-regul-ot-kl} can in turn be formulated as~\eqref{proj-inter} with $L=2$ affine subsets of $\RR_+^{N \times N}$
\eq{
	\Cc_1 \eqdef \enscond{\pi \in \RR_+^{N \times N}}{ \pi \ones = p }
	\qandq
	\Cc_2 \eqdef \enscond{\pi \in \RR_+^{N \times N}}{ \transp{\pi} \ones = q }.
}
The application of Bregman iterative projection (detailed in Section~\ref{sec-iterative-bregman}) to this splitting corresponds to the so-called IPFP/Sinkhorn algorithm (see Section~\ref{sec-pw} for bibliographical details).

The following well-known proposition details how to compute the relevant projections.

\begin{prop}\label{prop-projkl-row-cols}
	One has, for $\bar\pi \in (\RR_+)^{N \times N}$, 
	\eql{\label{eq-proj-row-cols}
		\KLproj_{\Cc_1}(\bar\pi) = \diag\pa{ \frac{p}{ \bar\pi\ones } } \bar\pi
		\qandq
		\KLproj_{\Cc_2}(\bar\pi) =  \bar\pi \diag\pa{ \frac{q}{ \transp{\bar\pi} \ones } }
	}
\end{prop}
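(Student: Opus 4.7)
The plan is to verify the formula for $\KLproj_{\Cc_1}$ directly via Lagrange multipliers; the formula for $\KLproj_{\Cc_2}$ then follows by transposition, since $\Cc_2$ is just $\Cc_1$ applied to $\transp{\bar\pi}$ with target $q$.

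First, I would observe that the problem $\min_{\pi \in \Cc_1} \KLdiv{\pi}{\bar\pi}$ is strictly convex on the relative interior of its domain (thanks to the strict convexity of $x \mapsto x\log x$), so it admits at most one minimizer; existence is guaranteed since the feasible set is nonempty (e.g.\ $\pi_{ij} = p_i \bar\pi_{ij}/(\bar\pi\ones)_i$ is admissible whenever $\bar\pi\ones > 0$ componentwise) and the objective is coercive. I would also note that entries $(i,j)$ with $\bar\pi_{ij}=0$ force $\pi_{ij}=0$ by the $0\log 0 = 0$ convention and the $\pi/\bar\pi$ ratio inside the log, so I may restrict to the support of $\bar\pi$.

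Second, I would introduce Lagrange multipliers $\lambda_i \in \RR$ for the row-sum constraints $\sum_j \pi_{ij} = p_i$ (the nonnegativity constraint will be slack, as the solution will turn out to be positive wherever $\bar\pi$ is). Using the gradient $\partial/\partial \pi_{ij}\, \pi_{ij}(\log(\pi_{ij}/\bar\pi_{ij})-1) = \log(\pi_{ij}/\bar\pi_{ij})$ (which is where the convenient $-1$ in the definition of $\KLdiv{\cdot}{\cdot}$ pays off), the first-order optimality condition reads
\eq{
        \log\!\pa{\frac{\pi_{ij}}{\bar\pi_{ij}}} = \lambda_i,
        \qquad \text{i.e.}\qquad \pi_{ij} = e^{\lambda_i}\,\bar\pi_{ij}.
}

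Third, I would plug this into the constraint $\sum_j \pi_{ij} = p_i$, which gives $e^{\lambda_i}(\bar\pi\ones)_i = p_i$, hence $e^{\lambda_i} = p_i/(\bar\pi\ones)_i$. Substituting back yields $\pi_{ij} = \frac{p_i}{(\bar\pi\ones)_i}\bar\pi_{ij}$, which is exactly the matrix identity $\diag\!\pa{p/(\bar\pi\ones)}\bar\pi$ claimed in~\eqref{eq-proj-row-cols}. Since this $\pi$ is nonnegative, the unconstrained Lagrangian stationary point is the KL projection. The formula for $\KLproj_{\Cc_2}$ is obtained by the same argument applied to columns, or more directly by transposing.

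The only mildly delicate point will be the handling of the boundary of $\RR_+^{N\times N}$ (entries where $\bar\pi_{ij}=0$, and the edge case where some row of $\bar\pi$ is identically zero while $p_i>0$, in which case $\Cc_1$ is infeasible relative to the support of $\bar\pi$ and the projection is $+\infty$). These are straightforward to dispatch by restricting the optimization to the support of $\bar\pi$ and appealing to strict convexity there; apart from this bookkeeping, the argument is a routine KKT computation.
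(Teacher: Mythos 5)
Your argument is correct: the first-order conditions give $\pi_{ij}=e^{\lambda_i}\bar\pi_{ij}$, and enforcing the row-marginal constraint yields exactly the multiplicative normalization in~\eqref{eq-proj-row-cols}, with the column case following by transposition. The paper omits a proof of this proposition (labelling it well-known), but your Lagrange-multiplier computation is precisely the argument the paper itself uses for the analogous Proposition~\ref{prop-klproj-bary}, so this is essentially the intended proof.
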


In plain words, the two projections in equation~\eqref{eq-proj-row-cols} normalize (with a multiplicative update) either the rows or columns of $\bar\pi$ so that they have the desired row-marginal $p$ or column-marginal $q$.

\begin{rem}[Fast implementation]\label{rem-fast-sink}
An important feature of iterations~\eqref{eq-iter-bregmanproj}, when combined with projections~\eqref{eq-proj-row-cols} is that the iterates $\iter{\pi}$ satisfy
\eq{
	\iter{\pi} = \diag(\iter{u})\xi\diag(\iter{v})
} 
where the vectors $(\iter{u},\iter{v}) \in \RR^N \times \RR^N$ satisfy $v^{(0)}=\ones$ and obey the recursion formula
\eq{
	\iter{u} = \frac{p}{\xi \iter{v}}
	\qandq
	v^{(n+1)} = \frac{q}{ \transp{\xi} \iter{u}}.
}
This allows to implement this algorithm by only performing matrix-vector multiplications using a fixed matrix $\xi$, possibly in parallel if several OT are to be computed for several marginals sharing the same ground cost $C$ as shown in~\cite{CuturiSinkhorn}.
\end{rem}

\newcommand{\myfig}[2]{\imgbox{\includegraphics[width=.155\linewidth]{transport/#1/#1-#2}}}

\begin{figure}[h!]
	\centering
	\begin{tabular}{c}
	\includegraphics[width=.5\linewidth]{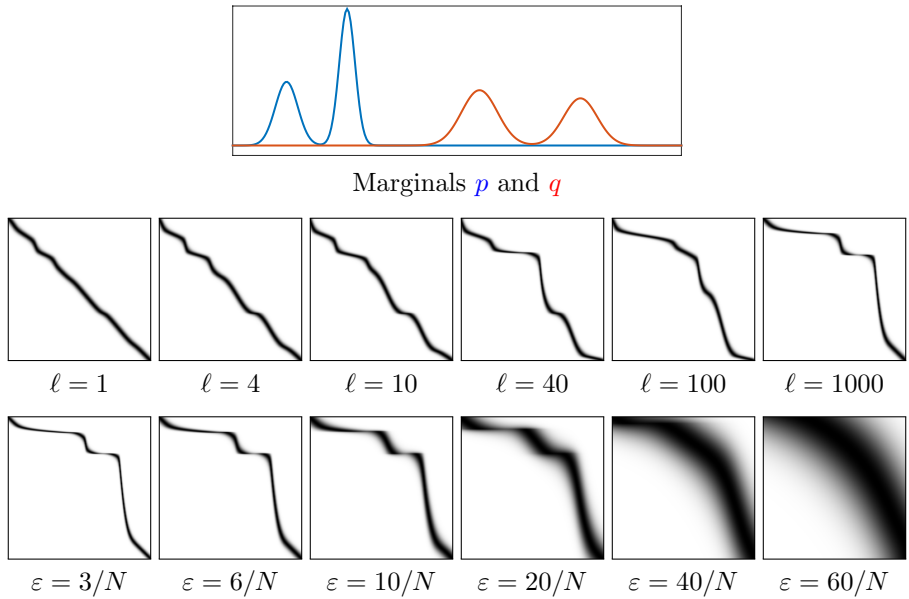}\\
	Marginals ${\color{blue} p}$ and ${\color{red} q}$\\[2mm]
	\end{tabular}
	\begin{tabular}{@{}c@{\hspace{1mm}}c@{\hspace{1mm}}c@{\hspace{1mm}}c@{\hspace{1mm}}c@{\hspace{1mm}}c@{}}
		\myfig{iteration}{1}&
		\myfig{iteration}{4}&
		\myfig{iteration}{10}&
		\myfig{iteration}{40}&
		\myfig{iteration}{100}&
		\myfig{iteration}{1000} \\
		$\ell=1$ & $\ell=4$ & $\ell=10$ & $\ell=40$ & $\ell=100$ & $\ell=1000$ \\[2mm]
		\myfig{regularization}{4}&
		\myfig{regularization}{6}&
		\myfig{regularization}{10}&
		\myfig{regularization}{20}&
		\myfig{regularization}{40}&
		\myfig{regularization}{60} \\
		$\ga=3/N$ & $\ga=6/N$ & $\ga=10/N$ & $\ga=20/N$ & $\ga=40/N$ & $\ga=60/N$
	\end{tabular}
	\caption{%
		\textit{Top:} the input densities $p$ (blue curve) and $q$ (red curve).
		\textit{Center:} evolution of the couplings $\pi^{(\ell)}$ at iteration $\ell$ of the Sinkhorn algorithm. 
		\textit{Bottom:} solution $\pi=\pi_\ga^\star$ of~\eqref{eq-regul-ot-kl} for several values of $\ga$.
	}
   \label{fig-ot-sinkhorn}
\end{figure}

Figure~\ref{fig-ot-sinkhorn} displays examples of transport maps $\pi=\pi_\ga^\star$ solving~\eqref{eq-regul-ot-kl}, for two 1-D marginals $(p,q) \in \RR^{N} \times \RR^N$ discretizing continuous densities on a uniform grid $(x_i)_{i=1}^N$ of $[0,1]$, and with a ground cost $C_{i,j} = \norm{x_i-x_j}^2$. 
The computation is performed with $N = 256$.
This figure shows how $\pi_\ga^\star$ converges towards a solution of the original un-regularized transport as $\ga \rightarrow 0$. It also shows how the iterates of the algorithm $\iter{\pi}$ progressively shift mass away from the diagonal during the iterations.


\subsection{Optimal Transport Barycenters}
\label{sec-barycenters}

We are given a set $(\km{p})_{k=1}^K$ of input marginals $\km{p} \in \Si_N$, and we wish to compute a weighted barycenter according to the Wasserstein metric. This problem finds many applications, as highlighted in Section~\ref{sec-pw}.

Following~\cite{Carlier_wasserstein_barycenter}, the general idea is to define the barycenter as a solution of a variational problem mimicking the definition of barycenters in Euclidean spaces. Given a set of normalized weights $\la \in \Si_K$, 
we consider the problem
\eql{\label{eq-barycenter-regul}
	\min_{p \in \Si_N} \enscond{ \sum_{k=1}^K \la_k W_\ga( \km{p},p ) }{ p \in \Si_N }
}
which as in \cite{Carlier-NumericsBarycenters}  is re-written as 
\eq{
	\foralls k = 1,\ldots,K, \quad p = \pi_{k} \ones
}
where the set of optimal couplings $\bpi = (\pi_{k})_{k=1}^K \in (\RR_+^{N \times N})^K$ solves
\eql{\label{eq-bary-variational-kl}
	\min \enscond{
		\KLdivL{\bpi}{\bxi} \eqdef \sum_{k=1}^K \la_k \KLdiv{ \km{\pi} }{\km{\xi}}
	}{
		\bpi \in \Cc_1 \cap \Cc_2	
	}
}
\eq{
	\qwhereq
	\forall k, \quad \km{\xi} \eqdef \xi \eqdef e^{ -\frac{C}{\ga} }
}	
and  the constraint sets are defined by
\begin{align*}
	\Cc_1 &\eqdef \enscond{ \bpi = (\km{\pi})_k \in (\Si_N)^K }{ \foralls k, \: \pi_{k}^T \ones = \km{p} } \\
	\qandq 
	\Cc_2 &\eqdef \enscond{ \bpi = (\km{\pi})_k \in (\Si_N)^K }{ \exists p \in \RR^N, \foralls k, \: \pi_k \ones = p }
\end{align*}

It is easy to check that the Bregman iterative projection scheme can be applied to this setting by simply replacing $\KL$ by $\KL_\la$.

The $\KL_\la$ projection on $\Cc_1$ is computed as detailed in Proposition~\ref{prop-projkl-row-cols}, since it is equal to the $\KL$ projection of each $\km{\xi}=\xi$ on a constraint of fixed marginal $\km{p}$. 
The $\KL_\la$ projection on $\Cc_2$ is computed as detailed in the following proposition.

\begin{prop}\label{prop-klproj-bary}
	For $\bar\bpi \eqdef (\km{\bar\pi})_k \in (\RR_+^{N \times N})^K$, 
	the projection $\bpi \eqdef (\km{\pi})_{k=1}^K = \KLprojL_{\Cc_2}(\bar\bpi)$ satisfies
	\eql{\label{eq-projc2-bary}
		\foralls k, \quad
		\km{\pi} = \diag\pa{ \frac{p}{ \km{\bar\pi} \ones } } \km{\bar\pi}
		\qwhereq
		p \eqdef \prod_{r=1}^K ( \bar\pi_r \ones )^{\la_r}
	} 
	where $\prod$ and $(\cdot)^{\la_r}$ should be understood as entry-wise operators.
\end{prop}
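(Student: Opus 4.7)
I would reduce the joint minimization over $\bpi$ to a scalar problem in the common row-marginal $p$. Writing $\Cc_2 = \bigcup_{p} \{\bpi : \km{\pi}\ones = p \text{ for every } k\}$, I rewrite the projection as
\[
\min_{\bpi \in \Cc_2} \sum_k \la_k \KLdiv{\km{\pi}}{\km{\bar\pi}}
= \min_{p \in \RR_+^N} \min_{\bpi : \forall k,\, \km{\pi}\ones = p} \sum_k \la_k \KLdiv{\km{\pi}}{\km{\bar\pi}}.
\]
For fixed $p$ the inner problem decouples across $k$, each subproblem being the KL projection of $\km{\bar\pi}$ onto the affine set $\{\pi \ge 0 : \pi\ones = p\}$, so by Proposition~\ref{prop-projkl-row-cols},
\[
\km{\pi}(p) \eqdef \diag\pa{\frac{p}{\km{\bar\pi}\ones}}\km{\bar\pi}.
\]

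Next I would substitute $\km{\pi}(p)$ back into the weighted cost to obtain a closed-form function of $p$. Using $\log(\km{\pi}(p)_{i,j}/\km{\bar\pi}_{i,j}) = \log p_i - \log(\km{\bar\pi}\ones)_i$ together with $\sum_j \km{\pi}(p)_{i,j} = p_i$ gives
\[
\KLdiv{\km{\pi}(p)}{\km{\bar\pi}} = \sum_i p_i \pa{\log p_i - \log(\km{\bar\pi}\ones)_i - 1}.
\]
Summing against $\la_k$, invoking $\sum_k \la_k = 1$ and the identity $\sum_k \la_k \log a_k = \log \prod_k a_k^{\la_k}$ collapses the $K$ terms into a single KL divergence
\[
\sum_k \la_k \KLdiv{\km{\pi}(p)}{\km{\bar\pi}} = \KLdiv{p}{w}, \qquad w \eqdef \prod_{r=1}^K (\bar\pi_r \ones)^{\la_r},
\]
with entry-wise power and product.

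Since $p$ appears only as an existential slack in the definition of $\Cc_2$, the outer minimization is $\min_{p \in \RR_+^N} \KLdiv{p}{w}$, which is unconstrained apart from nonnegativity; first-order optimality gives $\log(p_i/w_i) = 0$, hence $p = w$. Substituting this $p$ back into $\km{\pi}(p)$ yields~\eqref{eq-projc2-bary}. The one nontrivial bookkeeping step is the algebraic collapse in the middle paragraph: recognising that the $-\la_k \log(\km{\bar\pi}\ones)_i$ contributions recombine as the logarithm of the $\la$-weighted geometric mean $w$, so that the objective as a function of $p$ is itself an ordinary KL divergence. Once this identity is in hand, the optimizer $p = w$ is immediate and the proposition follows.
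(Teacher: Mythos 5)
Your proof is correct, and it takes a genuinely different route from the paper's. The paper works directly with the first-order (Lagrangian) conditions of the constrained problem: it introduces multipliers $u_k$ for the constraints $\km{\pi}\ones = p$, derives $\la_k \log(\km{\pi}/\km{\bar\pi}) + u_k\transp{\ones} = 0$ together with $\sum_r u_r = 0$ from stationarity in $p$, and then solves for the $a_k = e^{-u_k}$ to extract the geometric-mean formula. You instead perform a partial minimization: for fixed $p$ the problem decouples over $k$ into $K$ row-normalization projections already covered by Proposition~\ref{prop-projkl-row-cols}, and the reduced objective collapses (via $\sum_k\la_k=1$) into the single divergence $\KLdiv{p}{w}$ with $w = \prod_r(\bar\pi_r\ones)^{\la_r}$, whose unconstrained minimizer over $\RR_+^N$ is $p=w$. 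Your computation of the reduced cost and the final optimality condition both check out. What your approach buys is transparency and rigor on existence/uniqueness: each stage is a convex minimization solved in closed form, no multipliers need to be introduced or eliminated, and the identity $\sum_k\la_k\KLdiv{\km{\pi}(p)}{\km{\bar\pi}} = \KLdiv{p}{w}$ is an illuminating structural fact (the barycenter marginal is itself a KL projection onto the weighted geometric mean of the row marginals). What the paper's approach buys is brevity and a template that generalizes mechanically to the other constraint sets in the article (e.g.\ the Radon constraint $\tilde\Cc_k$), where the decoupling is slightly less clean. One cosmetic remark: both your argument and the paper's quietly drop the normalization $\bpi\in(\Si_N)^K$ appearing in the definition of $\Cc_2$ and optimize over all nonnegative $p$; this is consistent with the paper's intended reading, but worth being aware of since $\sum_i w_i \le 1$ in general.
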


\begin{proof}
	Introducing the variable $p$ such that for all $k$, $\pi_k \ones = p$, the first order conditions of the projection 
	$\KLprojL_{\Cc_2}(\bar\bpi)$ states the existence of Lagrange multipliers $(u_k)_k$ such that 
	\eq{
		\foralls k, \quad \la_k \log\pa{ \frac{\km{\pi}}{\km{\bar\pi}} } + u_k \transp{\ones} = 0 
		\qandq
		\sum_r u_r = 0. 
	}
	Denoting $a_k = e^{-u_k}$, one has $\prod_k a_k=\ones$ and $\km{\pi}=\diag(a_k^{1/\la_k}) \km{\bar\pi}$.
	Condition $\pi_k \ones = p$ thus implies that 
	\eq{
		a_k = \pa{ \frac{p}{\km{\bar\pi} \ones} }^{\la_k}, 
	}
	and condition $\prod_k a_k=\ones$ gives the desired value~\eqref{eq-projc2-bary} for $p$.	
\end{proof}

\begin{rem}[Special case]
	Note that when $K=2$, $(\la_1,\la_2) = (0,1)$, one retrieves exactly the IPFP/Sinkhorn algorithm to solve the entropic OT, as detailed in Section~\ref{sec-entropic-regul}. Our novel scheme to compute barycenters should thus be understood as the natural generalization of this IPFP algorithm to barycenters.  
\end{rem}

\begin{rem}[Memory efficient and parallel implementation]
	Similarly as for Remark~\ref{rem-fast-sink}, one verifies that iterations~\eqref{eq-iter-bregmanproj} in the special case of problem~\eqref{eq-bary-variational-kl} leads to iterates $\iter{\bpi} = ( \pi_k^{(n)} )_k$ which satisfy, for each $k$
	\eq{
		\pi_k^{(n)} = \diag(u_k^{(n)}) \xi \diag(v_k^{(n)})
	}
	for two vectors $(u_k^{(n)},v_k^{(n)}) \in \RR^{N} \times \RR^N$ initialized as $v_k^{(0)} = \ones$ for all $k$, 
	and computed with the iterations
	\eq{
		u_k^{(n)} = \frac{p^{(n)}}{\xi v_k^{(n)}}
		\qandq
		v_k^{(n+1)} = \frac{\km{p}}{ \transp{\xi} u_k^{(n)}}
	}
	where $p^{(n)}$ is the current estimate of the barycenter, computed as
	\eq{
		p^{(n)} = \prod_{k=1}^N \pa{ u_k^{(n)} \odot (\xi v_k^{(n)}) }^{\la_k}.
	}
	A nice feature of these iterations is that they can be computed in parallel for all $k$ using multiplications between the matrix $\xi$ and matrices storing $(u_k^{(n)})_k$ and $(v_k^{(n)})_k$ as columns. 
\end{rem}

Figure~\ref{fig-barycenter-entropic} shows an example of barycenters computation for $K=3$.	The three vertices of the triangle show the input densities $(p_1,p_2,p_3)$ which are uniform on binary shapes (diamond, annulus and square). 
The other points in the triangle display the results for the following values of $\la$
\eq{
	\begin{tabular}{c}
		(0,0,1) \\
		(1, 0, 3)/4 \quad (0, 1, 3)/4 \\
		(1,0,1)/2 \quad (1,1,2)/4 \quad (0,1,1)/2 \\ 
		(3,0,1)/4 \quad (2,1,1)/4 \quad (1,2,1)/4 \quad (0,3,1)/4 \\
		(1,0,0)\quad (3,1,0)/4 \quad (1,1,0)/2 \quad (1,3,0)/4 \quad (0,1,0)
	\end{tabular}
} 
The computation is performed on an uniform 2-D grid of $N = 256 \times 256$ points in $[0,1]^2$, and $\ga=2/N$.

\begin{figure}[h!]
	\centering
	\includegraphics[width=.7\linewidth]{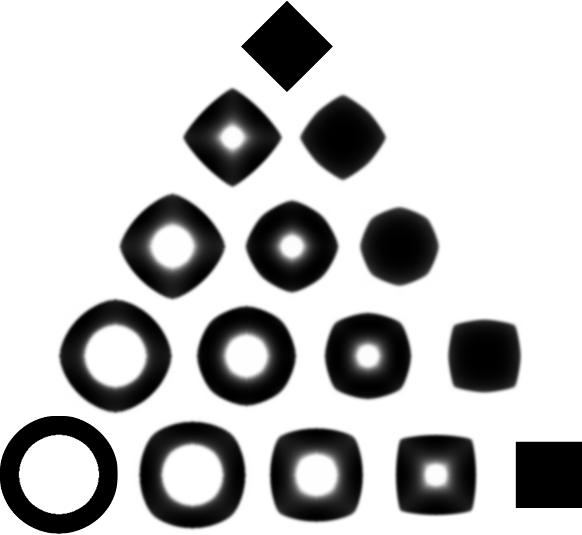}
	\caption{%
		Example of OT barycenters with entropic smoothing. 
	}
   \label{fig-barycenter-entropic}
\end{figure}


\subsection{Partial Radon Inversion with OT Fidelity}
\label{sec-tomography}

The partial Radon transform (i.e. the computation of integrals of the data along parallel rays in a small limited set of directions) is a mathematical model for several scanning medical acquisition devices. This is an ill-posed linear operator, and inverting it while preventing noise and artifacts to blowup is of utmost importance for the targeted imaging applications. It is out of the scope of this paper to review the overwhelming literature on the topic of Radon inversion, and we refer to the book~\cite{HermanTomography} for an overview of classical approaches, and~\cite{Klann11} and the references therein for examples of state-of-the art methods. 

The goal of this section is not to present a state of the art inversion scheme, but rather to show how the method recently introduced by~\cite{AbrahamRadon} can be solved using a simple iterative Bregman projection algorithm. We describe here the method in a fully discretized setting, where the Radon transform is implemented using a nearest neighbor interpolation. 

We consider a square discretization grid of $N = N_0 \times N_0$ pixels, indexed with 
\eq{
	s = (s_1,s_2) \in \Om_N \eqdef \om_{N_0} \times \om_{N_0}
	\qwhereq
	\om_{N_0} \eqdef \{1,\ldots,N_0\}^2.
}

Given an angle $\th$, we consider the following discrete lines in $\Om_N$, $\foralls (s_1,s_2) \in \Om_N$, 
\eq{
	\ell_{s_1,s_2}^\th \eqdef 
	\choice{
		(s_2, s_1 + (s_2-1) \tan(\theta)  \text{ mod } N_0), 
			\;\text{if}\; 
			\text{mod}(\th,\pi) \in [0,\pi/4] \cup [3\pi/4,\pi] \\
		(s_1 + (s_2-1) \tan(\theta)  \text{ mod } N_0, s_2), 
			\;\text{if}\; 
			\text{mod}(\th,\pi) \in [\pi/4,3\pi/4] \\	
	}
}
where the mod $N_0$ is a modulo $N_0$ that maps the indices in the admissible range $\Om_N$, hereby effectively implementing a convenient cyclic boundary condition. 

The discrete Radon integration $R_{\th}(f) \in \RR^{N_0}$ for such an angle $\th$ of an image $f \in \RR^N \sim \RR^{N_0 \times N_0}$ computes
\eq{
	\foralls s_1 \in \om_{N_0}, \quad
	R_\th(f)_{s_1} \eqdef \sum_{s_2=1}^{N_0} f_{\ell_{s_1,s_2}} \in \RR.
}
Its adjoint $R_{\th}^*$, the back-propagation operator along direction $\th$, reads, for $r \in \RR^{N_0}$, 
\eq{
	f = R_{\th}^*(r)  \qwhereq
	\foralls s \in \Om_N, \quad
	f_{\ell_{s_1,s_2}} = r_{s_1}.
}

We consider the linear inverse problem of reconstructing an approximation of an unknown image $f_0 \in \RR^N$ from (possibly noisy) partial Radon measurements 
\eql{\label{eq-radon-fwd}
	r = (\km{r})_{k=1}^K
	\qwhereq
	\km{r} = R_{\th_k}(f_0) + \km{w}, 
}
where $(\th_k)_{k=1}^K$ is a set of angles, and $\km{w}$ is the noise perturbing the observation. We denote $R(f) = (R_{\th_k}(f))_{k=1}^K$ so that $R : \RR^N \mapsto \RR^{N_0 \times K}$ is a linear operator, and $R^*$ is its adjoint.

The simplest way to perform a reconstructiong is to solve for the least squares  estimate 
\eql{\label{eq-l2-reconstr-radon}
	R^+(r) = R^* (RR^*)^{-1} r \eqdef \uargmin{f} \norm{f}
		\qstq \foralls k,  \quad R_{\th_k}(f) = \km{r}. 
}
This linear inverse does a poor job in the case of a small number $K$ of projections, since it exhibits reconstruction artifacts, as shown on Figure~\ref{fig-radon}.

In order to obtain a better reconstruction, and following the idea introduced in~\cite{AbrahamRadon}, we suppose that one has access to a template $g_0 \in \RR^N$ which is intended to be some approximation of $f_0$, possibly with some translation and small deformations. To leverage the strong robustness of the OT distance with respect to translation and small deformations, the reconstruction is obtained by using a sum of Wasserstein distances to the observation and to the template
\eql{\label{eq-radon-recons-initi}
	\umin{f \in \Si_N} \la_1 W_{2,\ga}( f,g_0 ) + \la_2\sum_{k=1}^Q W_{1,\ga}( R_{\th_k}(f),\km{r} ),
}
where $0 < \la_1 \leq 1$ is a weight that accounts for the degree of confidence in the template $g_0$, and $\la_2=1-\la_1$.

Here, $W_{2,\ga}$ indicates the entropic Wasserstein distance~\eqref{eq-regul-ot-kl} on the 2-D grid $\Om_N$ where we defined the cost matrix $C=C^2$ as
\eq{
	\foralls (s, t) \in (\Om_N)^2, \quad
		C_{s,t}^2 \eqdef (s_1-t_1)^2 + (s_2-t_2)^2
}
and $W_{1,\ga}$ indicate the entropic Wasserstein distance~\eqref{eq-regul-ot-kl} on a 1-D periodic grid for the cost $C=C^1$
\eq{
	\foralls (i,j) \in (\om_{N_0})^2, \quad
		C_{i,j}^1 \eqdef \umin{k \in \ZZ} (i-j+k N_0)^2.
}

Similarly to the barycenter problem~\eqref{eq-bary-variational-kl}, we compute $f$ solving~\eqref{eq-radon-recons-initi} as $f = \pi \ones_N$ where $\pi$ solves 
\eql{\label{eq-radon-variational-kl}
	\min
	\enscond{
		\la_1 \KLdiv{\pi}{\xi} +  \la_2\sum_{k=1}^K \KLdiv{\km{\pi}}{\km{\xi}}
	}{
		\foralls k, \; (\pi,\km{\pi}) \in \Cc_k, \cap \tilde\Cc_k
	}
}
\eq{
	\qwhereq
	\xi = e^{ -\frac{ C^2 }{\ga} }
	\qandq
	\foralls k, \quad 
	\km{\xi} = e^{ -\frac{C^1}{\ga} }	
}
(here the $\exp$ should be understood component-wise) and where we introduced
\begin{align*}
	\foralls k, \; \Cc_k &= 
		\enscond{ (\pi,\km{\pi}) \in \Si_{N} \times \Si_{N_0} }{ 
			\transp{\pi} \ones_N = g_0 \qandq \pi_k^{*} \ones_{N_0} = \km{r} } \\
	\foralls k, \; \tilde\Cc_k &= 
		\enscond{ (\pi,\km{\pi}) \in \Si_{N} \times \Si_{N_0} }{
			R_{\th_k}(\pi \ones_{N}) = \km{\pi} \ones_{N_0}
	}.
\end{align*}
Problem~\eqref{eq-radon-variational-kl} thus corresponds to a (weighted) KL projection on the intersection of $2K$ constraints. Computing the projection on each $\Cc_k$ is achieved as detailed in Proposition~\ref{prop-projkl-row-cols}. The following proposition, which is a simple extension of Proposition~\ref{prop-klproj-bary}, shows how to project on $\tilde\Cc_k$.

\begin{prop}
	For any $k = 1,\ldots,K$, the projection 
	$\bpi = (\pi,\km{\pi}) = \KLprojL_{\tilde\Cc_k}(\bar\bpi)$ of $\bar\bpi = (\bar\pi,\km{\bar\pi})$
	for the KL metric 
	\eq{
		\KLdivL{\bpi}{\bar\bpi} \eqdef \la_1 \KLdiv{\pi}{\bar\pi} + \la_2 \KLdiv{\km{\pi}}{\km{\bar\pi}}
	}
	satisfies
	\eq{
		\pi = \diag\pa{ R_{\th_k}^* \pa{ \frac{\de_k}{\al_k} } }  \bar\pi
		\qandq
		\km{\pi} = \diag\pa{ \frac{\de_k}{\be_k} } \km{\bar\pi}
	}
	where we defined
	\eq{
		\km{\al} \eqdef R_{\th_k}(\bar\pi \ones_{N}), \quad
		\km{\be} \eqdef \km{\bar\pi} \ones_{N_0}, 
		\qandq
		\km{\de} = \al_k^{\la_1} \odot \be_k^{\la_2}
	}
	where the exponentiations are component-wise.
\end{prop}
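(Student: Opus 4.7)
The plan is to set up the KKT conditions for the weighted KL projection, observe that the equality constraint $R_{\theta_k}(\pi \ones_N)=\km{\pi}\ones_{N_0}$ contributes a single Lagrange multiplier $u \in \RR^{N_0}$, and then solve for $u$ by imposing the constraint. The key identity that drives the calculation is the adjoint relation $\dotp{u}{R_{\theta_k}(v)}=\dotp{R_{\theta_k}^* u}{v}$, so that after pairing with $\pi\ones_N$ the multiplier enters the first-order equations for $\pi_{s,t}$ only through $(R_{\theta_k}^* u)_s$, i.e.\ through a value that is constant along the ray through $s$.

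More precisely, I would write the Lagrangian
\eq{
\Lc(\pi,\km{\pi},u) \eqdef \la_1\KLdiv{\pi}{\bar\pi} + \la_2 \KLdiv{\km{\pi}}{\km{\bar\pi}} + \dotp{u}{R_{\theta_k}(\pi\ones_N) - \km{\pi}\ones_{N_0}}
}
and set its derivatives to zero. Differentiating in $\pi_{s,t}$ yields $\la_1\log(\pi_{s,t}/\bar\pi_{s,t}) + (R_{\theta_k}^* u)_s = 0$, hence
\eq{
\pi_{s,t} = \bar\pi_{s,t}\,\exp\!\pa{-\tfrac{1}{\la_1}(R_{\theta_k}^* u)_s},
}
while differentiating in $\km{\pi}_{s_1,j}$ gives $\km{\pi}_{s_1,j} = \km{\bar\pi}_{s_1,j}\exp(u_{s_1}/\la_2)$. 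Setting $a_{s_1}\eqdef \exp(u_{s_1}/\la_2)$ and using that $(R_{\theta_k}^* u)_s = u_{s_1}$ for every pixel $s$ lying on ray $s_1$, both updates take the multiplicative-diagonal form asserted in the proposition, with $\km{\de}_{s_1} = \km{\be}_{s_1} a_{s_1}$ from the second equation and $\km{\de}_{s_1} = \km{\al}_{s_1} a_{s_1}^{-\la_2/\la_1}$ from summing the first equation along ray $s_1$ and invoking the constraint.

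I would then eliminate $a_{s_1}$ by equating these two expressions for $\km{\de}_{s_1}$, which gives $a_{s_1}^{(\la_1+\la_2)/\la_1} = \km{\al}_{s_1}/\km{\be}_{s_1}$. Using the normalization $\la_1+\la_2=1$ adopted in Section~\ref{sec-tomography}, this simplifies to $a_{s_1} = (\km{\al}_{s_1}/\km{\be}_{s_1})^{\la_1}$, whence $\km{\de}_{s_1} = \km{\be}_{s_1}^{\,\la_2}\km{\al}_{s_1}^{\,\la_1}$, matching the stated formula. Re-expressing $\pi_{s,t} = (R_{\theta_k}^*(\km{\de}/\km{\al}))_s\,\bar\pi_{s,t}$ and $\km{\pi}_{s_1,j}=(\km{\de}/\km{\be})_{s_1}\km{\bar\pi}_{s_1,j}$ concludes.

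The only slightly delicate step is the bookkeeping for $R_{\theta_k}^* u$: one must verify carefully that summing $\bar\pi_{s,t}\exp(-u_{s_1}/\la_1)$ over all $(s,t)$ with $s$ on ray $s_1$ yields exactly $\km{\al}_{s_1}\,a_{s_1}^{-\la_2/\la_1}$, which follows from the definition $\km{\al}=R_{\theta_k}(\bar\pi\ones_N)$ and the fact that $R_{\theta_k}^* u$ is constant along each ray. Once that is cleanly stated, the rest is straightforward algebra, and strong convexity of the KL objective guarantees that the unique critical point produced by the KKT system is the projection.
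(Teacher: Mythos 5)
Your proposal is correct and follows essentially the same route as the paper: the paper omits a proof, calling this a ``simple extension'' of Proposition~\ref{prop-klproj-bary}, whose proof is exactly your argument --- write the first-order (Lagrange multiplier) conditions for the weighted KL projection, observe the multiplier enters $\pi$ only through $R_{\th_k}^*u$ (constant along each ray), and solve for it by imposing the constraint, using $\la_1+\la_2=1$ to obtain the geometric mean $\de_k=\al_k^{\la_1}\odot\be_k^{\la_2}$. Your bookkeeping of the adjoint and the exponents checks out, so nothing is missing.
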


Figure~\ref{fig-radon} shows an example of application of the method for an image $f_0$ which is discretized on a grid of $N=80 \times 80$ points, using $\ga=2/N$ and $K=12$ Radon directions. There is no additional noise in the measurements, i.e. $\km{w}=0$ in~\eqref{eq-radon-fwd}. The template $g_0$ is a binary disk. These results show how using a large $\la_1$ recovers a result that is close to $g_0$, while using a smaller $\la$ introduces the geometric features of $f_0$ but also reconstruction artifacts. Note that the linear reconstruction $R^+(r)$ contains reconstruction artifacts.

\newcommand{\myfigRad}[1]{\includegraphics[width=.24\linewidth]{radon/#1}}
\newcommand{\myfigRadMarg}[1]{\includegraphics[width=.48\linewidth]{radon/marginals/marginal-#1}}

\begin{figure}[h!]
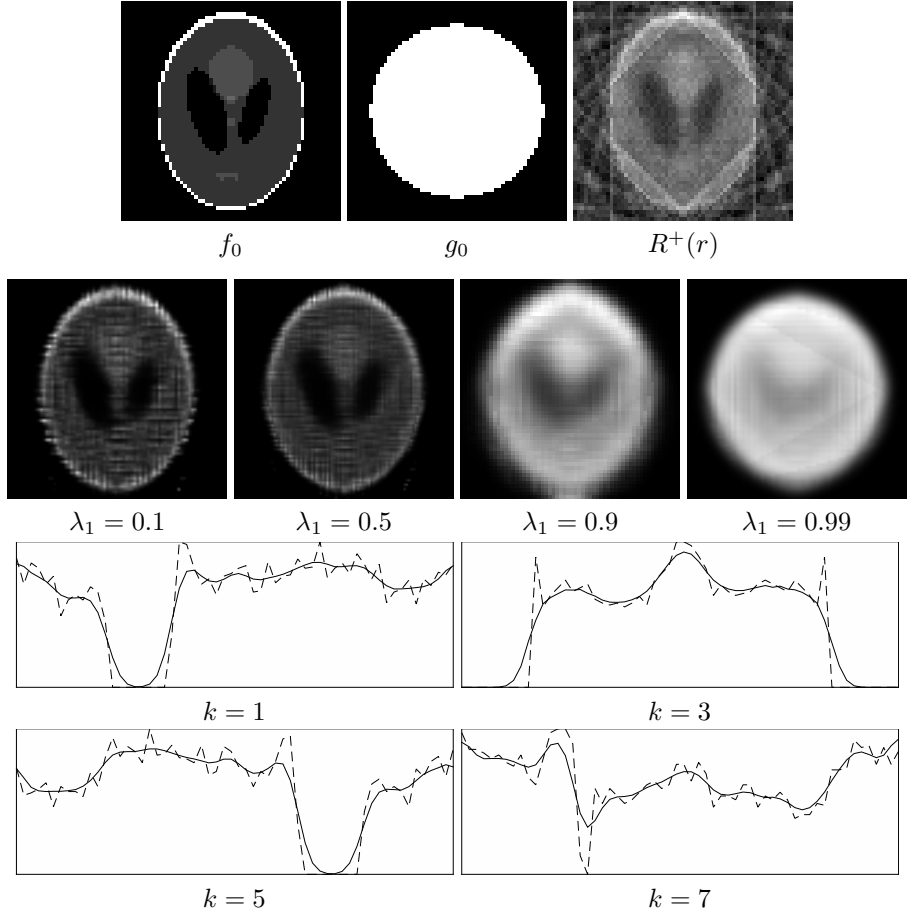

	\centering
	\begin{tabular}{@{}c@{\hspace{1mm}}c@{\hspace{1mm}}c@{}}
		\myfigRad{original} &
		\myfigRad{template} &
		\myfigRad{recovered-Q12-L2} \\
		$f_0$ & $g_0$ & $R^+(r)$ \\[2mm]
	\end{tabular}
	\begin{tabular}{@{}c@{\hspace{1mm}}c@{\hspace{1mm}}c@{\hspace{1mm}}c@{}}
		\myfigRad{recovered-Q12-W2-lambda01} &
		\myfigRad{recovered-Q12-W2-lambda1} &
		\myfigRad{recovered-Q12-W2-lambda10} &
		\myfigRad{recovered-Q12-W2-lambda100} \\
		$\la_1=0.1$ & $\la_1=0.5$ & $\la_1=0.9$ & $\la_1=0.99$
	\end{tabular}
	\begin{tabular}{@{}c@{\hspace{1mm}}c@{}}
		\myfigRadMarg{1} &
		\myfigRadMarg{3} \\
		$k=1$ & $k=3$ \\
		\myfigRadMarg{5} &
		\myfigRadMarg{7} \\
		$k=5$ & $k=7$ 
	\end{tabular}	
	\caption{%
		Example of reconstructions from partial Radon measurements. 
		The two bottom rows show the input transforms $r_k=R_{\th_k}(f_0)$ (dashed curves)
		and the recovered Radon transforms $R_{\th_k}(f_0)$ where $f$ solved~\ref{eq-radon-recons-initi} (plain curves),
		for a few values of $k$. 
	}
   \label{fig-radon}
\end{figure}


\section{Multi-marginal  Optimal Transport}
\label{sec-multi-marginal}

Multi-marginal optimal transport is a natural extension of optimal transport with many 
potential fields of applications, see Section~\ref{sec-pw}.

\subsection{Multi-marginal Regularization}
\label{subsec-multi-marginal-regularization} 
As in the barycenter problem, we are given $K$ marginals $(\km{p})_{k=1}^K$. 
We denote $\pi \in \RR_+^{N^K}$ a $K$-dimensional array, indexed as $\pi_j$ for $j=(j_1,\ldots,j_K) \in \{1\ldots,N\}^K$. 
The push-forward $S_k(\pi) \in \RR^N$ of such a $\pi$ along dimension $k$ is computed as
\eq{
	\foralls i \in \{1,\ldots,N\}, \quad
	S_k(\pi)_i \eqdef \sum_{j_1,j_2,\ldots,j_{k-1},j_{k+1}, ..,j_K} \pi_{j_1,j_2,\ldots,j_{k-1},i,j_{k+1}, ..,j_K}.
}
The set of couplings between the marginals is
\eq{
	\Pi(p_1,p_2,\ldots,\km{p}) \eqdef 
	\enscond{ \pi \in \RR_+^{N^K} 
	}{ 
		\forall k, \quad S_k(\pi) = \km{p}
	}.
}

Given a cost matrix $C \in \RR_+^{N^K}$, the regularized OT problem~\eqref{eq-regul-transport} is generalized to this multi-marginal setting as
\eql{\label{eq-multimarg-entropic}
	\umin{\pi \in \Pi(p,q)} \dotp{C}{\pi} - \ga E(\pi).
}

Similarly as~\eqref{eq-regul-ot-kl}, this problem can be re-cast as a KL projection  
\eql{\label{eq-multimarginal-proj}
	\min_\pi \enscond{
		\KLdiv{\pi}{\xi} 
	}{
		\pi \in \Cc_1 \cap \Cc_2 \cap \ldots \cap \Cc_K
	}
	\qwhereq
	\xi \eqdef e^{-\frac{C}{\ga}}
}
where the exponentiation is exponent-wise, and where
\eq{
	\Cc_k \eqdef \enscond{ \pi \in \RR_+^{N^K} }{ S_k(\pi)=\km{p} }.
}

The Bregman projection on each of  the convex $\Cc_k$ are again given by a simple normalisation as detailed in the following proposition.

\begin{prop}
	For any $k$, denoting $\pi = \KLproj_{\Cc_k}(\bar{\pi})$, one has 
	\eq{
		\foralls j=(j_1,\ldots,j_K), \quad		
		\pi_{j} = \frac{(\km{p})_{j_k}}{ S_k(\bar\pi)_{j_k} } \bar\pi_{j} 
	}
\end{prop}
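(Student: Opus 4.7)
My plan is to mimic the proof of Proposition~\ref{prop-projkl-row-cols}, adapted to the multi-marginal tensor setting, by writing first-order optimality conditions for the constrained KL minimization and then solving explicitly for the Lagrange multipliers via the marginal constraint.

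First I would write the problem that defines $\pi = \KLproj_{\Cc_k}(\bar\pi)$ as the minimization of $\KLdiv{\pi}{\bar\pi}$ subject to the $N$ linear equality constraints $S_k(\pi)_i = (\km{p})_i$ for $i \in \{1,\dots,N\}$, together with the nonnegativity constraint $\pi \ge 0$ (automatically handled by the effective domain of the KL divergence). Introducing a Lagrange multiplier $u \in \RR^N$ for these constraints, the stationarity condition with respect to each entry $\pi_j$ (for $j = (j_1,\dots,j_K)$) reads
\eq{
    \log\pa{ \frac{\pi_j}{\bar\pi_j} } = u_{j_k},
}
since only the $i = j_k$ component of the push-forward involves the coordinate $\pi_j$. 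Exponentiating gives the multiplicative ansatz $\pi_j = a_{j_k}\,\bar\pi_j$ with $a_{j_k} \eqdef e^{u_{j_k}}$, which depends on the multi-index $j$ only through its $k$-th coordinate.

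Next I would determine $a$ by imposing the primal constraint. Summing the ansatz over all indices except $j_k$ and using the definition of $S_k$,
\eq{
    (\km{p})_{j_k} = S_k(\pi)_{j_k} = a_{j_k}\, S_k(\bar\pi)_{j_k},
}
so $a_{j_k} = (\km{p})_{j_k}/S_k(\bar\pi)_{j_k}$ (well-defined as soon as the feasible set is nonempty, which forces $S_k(\bar\pi)_{j_k} > 0$ whenever $(\km{p})_{j_k} > 0$; otherwise $\pi_j = 0$ for all $j$ with that $j_k$-th coordinate, consistent with the formula under the convention $0/0 = 0$). Plugging back into the ansatz yields precisely the claimed formula.

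I do not anticipate a real obstacle here: the computation is a direct analogue of the row/column rescaling in Proposition~\ref{prop-projkl-row-cols}, the only subtlety being notational bookkeeping for the tensor index $j$ and the fact that the multiplier depends only on $j_k$, which is immediate from the structure of the constraint $S_k(\pi) = \km{p}$. Sufficiency of the first-order conditions follows from strict convexity of $\KLdiv{\cdot}{\bar\pi}$, guaranteeing uniqueness of the minimizer.
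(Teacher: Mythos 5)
Your argument is correct and is exactly the approach the paper uses for the analogous results (the paper states this proposition without proof, but its proof of the barycenter projection, Proposition~\ref{prop-klproj-bary}, proceeds by the same first-order/Lagrange-multiplier computation): the multiplier depends on $j$ only through $j_k$ because of the structure of $S_k$, and the constraint then determines it as the ratio $(\km{p})_{j_k}/S_k(\bar\pi)_{j_k}$. Your handling of the degenerate case $S_k(\bar\pi)_{j_k}=0$ and the appeal to strict convexity for sufficiency are both sound.
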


It is thus possible to use the Bregman iterative projection detailed in Section~\ref{sec-iterative-bregman} to compute the projection~\eqref{eq-multimarginal-proj}. 

We now detail in the two following sections two typical cases of application of multi-marginal OT: grid-free barycenter computation (Section~\ref{sec-multimarg-bary}) and resolution of generalized Euler flow (Section~\ref{sec-gen-euler}).

\subsection{Multi-marginal Barycenters}
\label{sec-multimarg-bary}

As shown in~\cite{Carlier_wasserstein_barycenter}, the computation of barycenters of measures (thus the continuous analogous of~\eqref{sec-barycenters}) can be computed by solving a multi-marginal transport problem. 



Let us suppose that the input measures $(\km{\mu})_{k=1}^K$ defined on $\RR^d$ are of the form $\km{\mu} = \sum_{i=1}^N p_{k,i} \de_{x_i}$, where $\km{p}=(p_{k,i})_{i=1}^N \in \Si_N$,  where $\{x_i\}_i \subset \RR^d$ and $\de_{x}$ is the Dirac measure at location $x \in \RR^d$. It is shown in~\cite{Carlier_wasserstein_barycenter} that the Wasserstein barycenter of the $\km{\mu}$ with weights $(\la_k)_k \in \Si_K$ for the quadratic Euclidean distance ground cost is 
\eql{\label{eq-continuous-barycenter}
	\mu_\la \eqdef \sum_{j=(j_1,j_2,\ldots,j_K)} 
		 \pi_{j} \de_{ A_{j}(x)}
}
where $A_{j}(x)\eqdef \sum_k \la_k x_{j_k}$ is the Euclidean barycenter and $\pi \in \RR_+^{N^K}$ is an optimal multi-marginal coupling that solves~\eqref{eq-multimarg-entropic} for the following cost 
\eq{
	C_{j}  = \sum_{1 \leq k \leq K}\dfrac{\lambda_{k}}{2} \norm{ x_{j_k}-A_j(x) }^2.  
}

An important point to note is that the measure barycenter~\eqref{eq-continuous-barycenter} is in general composed of more than $N$ Diracs, and that these Diracs are not constrained to be on the discretization grid $(x_i)_i$. In particular, the obtained result is different from the one obtained by solving~\eqref{eq-barycenter-regul}, which computes a barycenter that lies on the same grid as the input measures. In some sense, formulation~\eqref{eq-continuous-barycenter} is able to compute the ``true'' barycenter of measures, whereas~\eqref{eq-barycenter-regul} computes an approximation on a fixed grid, but the price to pay is the resolution of a high-dimensional multi-marginal program.  

Figure~\ref{fig:barm1} shows an histogram depiction of the measure $\mu_\la$ defined in~\eqref{eq-continuous-barycenter}, for the iso-barycenter (i.e. $\la_k=1/K$ for all $k$). It is computed by first solving~\eqref{eq-multimarg-entropic} with the same three marginals used in Figure~\ref{fig-barycenter-entropic}. The histogram $p \in \Si_N$ computed on a grid of $N=60 \times 60$ points. Each $p_i$ is the total mass of $\mu_\la$ in the discretization square $S_i$ of size $1/\sqrt{N} \times 1/\sqrt{N}$, i.e. $p_i = \mu_\la(S_i)$. 


\begin{figure}[htdp]
	\centering
	\imgbox{\includegraphics[width=.45\textwidth]{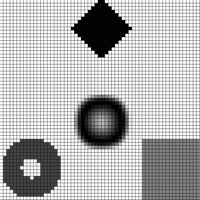}}
	\caption{Barycenter computed by solving the multi-marginal problem with three marginals (annulus, diamond and square) discretized on an uniform 2-D grid of $N=60 \times 60$ points in $[0,1]^{2}$ and $\ga=0.005$. See the main text body for details about how the display of the barycenter measure is performed. }
  	\label{fig:barm1}
\end{figure}

\subsection{Generalized Euler Flows}
\label{sec-gen-euler}

Brenier proposed in a series of papers~\cite{BrenierEulerAMS,BrenierEulerARMA,BrenierEulerCPAM} a relaxation of the Euler equation of incompressible fluids with constrained initial and  final data. These data are conveniently expressed as a volume preserving map $\Xi$ of the domain. This relaxation can be understood as requiring the resolution of a multi-marginal transportation   with an infinite number of marginals. Following~\cite{BrenierGeneralized} (equation (21) section VII), when discretizing this problem with $K$ steps in time, one thus faces the resolution of a $K$ marginals OT problem.

We consider a fixed uniform discretization of $[0,1]^d$ with points $(x_i)_{i=1}^N$. The marginals are the uniform measure on this set (as discretization of the Lebesgue measure), i.e. $\km{p}=\ones/N$ for all $k$. The prescribed volume preserving maps $\Xi : [0,1]^d \rightarrow [0,1]^d$ is discretized using a permutation of the grid points, i.e. a discrete bijection $\si : \{1,\ldots,N\} \rightarrow \{1,\ldots,N\}$.

The cost function is then
\eq{
	C_{j_1,\ldots,j_K}  = \sum_{k=1,\ldots,K-1} 
	\norm{ x_{j_{k+1}} - x_{j_{k}}  }^2   
	+  
	\norm{ x_{\si(j_1)} - x_{j_K} }^2 
}
and the optimal coupling $\pi$ solves~\eqref{eq-multimarg-entropic}.

For each $k \in \{1,\ldots,K\}$, the transition probability from "time"  $1$ to "time" $k$~: $T_{1,k} \in \RR^{N \times N}$ is defined as 
\begin{equation}
\label{2coupl}
	\foralls (s,w) \in \{1,\ldots,N\}^2, \quad
	(T_{1,k})_{s,w} = \sum_{j_i \ne {j_1,j_k}} \pi_{s,j_2,\ldots,j_{k-1},w,j_{k+1},\ldots,j_K}.
\end{equation}
It represents the evolution of a generalized flow of particles at  time $t=\frac{k-1}{K-1}$.  Note that in this setting, particles trajectories are 
non deterministic and their mass may split and spread across the domain. 


Brenier's numerical method~\cite{BrenierGeneralized} is based on an approximation of the measure preserving map by a one to one permutation of the domain and the representation of the diffuse coupling therefore needs a large number of particles. Our resolution method is different and computes a space discretization of the coupling matrix and naturally encodes non-diffeomorphic volume preserving maps. The coupling $\pi$ is an array of size $(N^d)^K$ where the $d$-dimensional physical domain is discretized on $N^d$ points and we have $K$ times steps. However, as explained in Remark~\ref{remjd} below, because of the structure of the cost we only need to store and multiply $(N^d)^2$ matrices. 
 
\begin{rem}\label{remjd}[Reduction to Transitions probabilities ]
As defined in~\eqref{eq-multimarginal-proj}, the resolution of the regularized $K$-marginal OT problem boils down to the computation of a KL projection of $\xi = e^{-\frac{C}{\ga}}$
We can rewrite the coupling using only 2 smaller matrices $\xi^0, \xi^1 \in \RR^{N \times N}$ since
\begin{equation}\label{gabarExp}
 	\xi_{j_{1},\ldots,j_{K}} = \pa{ \prod_{k=1}^{K-1} \xi^0_{j_{k},j_{k+1}} } \,  \xi^1_{j_{K}\si(j_{1})}
\end{equation}
\eq{
	\qwhereq
 	\xi^{0}_{\alpha,\beta} = 
	e^{-\frac{D_{\alpha \beta}}{\ga} },
	\quad\quad 
	\xi^{1}_{\beta,\alpha} = e^{-\frac{D_{\beta\si(\alpha)}}{\ga}}.
}
and $D_{\alpha \beta}=\norm{x_{\alpha}-x_{\beta} }^{2}$. 
We recall that all the marginals are equal to (a discretization of) the Lebesgue measure and $(x_{i})_i$ are discretized on the unit cube $[0,1]^{d}$.
As already noticed in Remark~\ref{rem-fast-sink},  the iterative Bregman projections~\eqref{eq-iter-bregmanproj} (always on the same Lebesgue marginal constraint)  can be simplified as an IPFP iterative procedure.
The optimal coupling $\pi$ that solves~\eqref{eq-multimarg-entropic} can be actually written as follows
\eq{
        \pi_{j_1,\ldots,j_K} = \xi_{j_1,\ldots,j_K} \prod_{k=1}^{K} u^k_{j_k} ,
}
where $u^k_{j_k}$ is the $(j_k)^{\text{th}}$ component of $u^k \in \RR^N$, for $k=1,\ldots,K$ (the $k^{\text{th}}$ vector being associated to the $k^{\text{th}}$ marginal).
Moreover the $u^k$ are uniquely determined by the constraints over the marginals of $\pi$
\eq{
   	u^k_{j_k} = \frac{1/N}{  
      	\sum_{m \neq k}  \sum_{j_m}  
		\{  \xi_{j_1,\ldots,j_K}   \prod_{\ell \neq k} u^\ell_{j_\ell}   \} 
	}
}
and the IPFP procedure for $K$ marginals is 
\eq{\label{IPFPmultimarg}
   u^{k,(n)}_{j_k} = \frac{1/N}{
   		\sum_{m \neq k} {\sum_{j_m}}  
		\{   \xi_{j_1,\ldots,j_K}  \prod_{\ell \neq k} {u^{\ell,(n-1)}_{j_\ell}}  \} 
	}
}
However, one can notice that the sum in~\eqref{IPFPmultimarg} could be computationally onerous, but thanks to~\eqref{gabarExp} we can rearrange it as   
\eq{
  	u^{k,(n)}_{j_k} =\frac{1/N}{ B_{j_k,j_k}}
}
where $u^{k,(n)}$ is the $k^{\text{th}}$ vector at step $(n)$ and $B$ is the the product of $K$ smaller $N \times N$ matrices 
\eq{
 	B=\xi^{\text{init}} \bigotimes_{\ell=1}^{K-1} \tilde{\xi}_{\ell}	
	\qwithq
	\xi^{\text{init}} = \choice{
		\xi^{0} \qforq k \neq K, \\
		\xi^{1} \quad \text{otherwise}
		}
}
\eq{
	\qandq
	\tilde{\xi}_{\ell} = \choice{
		\diag(u^{\sigma^k(\ell),(n)}) \otimes \xi^0 \qforq \sigma^k(\ell) \neq K \qandq \sigma^k(\ell) < k, \\
		\diag(u^{\sigma^k(\ell),(n-1)}) \otimes \xi^0 \qforq \sigma^k(\ell) \neq K \qandq \sigma^k(\ell) > k, \\
		\diag(u^{K,(n-1)})\otimes\xi^1 \quad \text{otherwise},
	}
}
where $\otimes$ is the standard matrix product and we use the Matlab convention that $\diag$ of vector is the diagonal matrix with vector values and $\diag$ of matrix is the vector of its diagonal.
We also highlight that, to use the simplification the  
 $u^l$ must be ordered in the correct way so that the computation of the sum for the  $k$ieth update starts at $k+1$ and finishes at $k-1$.
 We have introduced the circular  permutations $\sigma^{k}(\ell)=(\ell+k-1 \mod K)+1$ which 
returns the $(\sigma^k(\ell))^{\text{th}}$ term at the $\ell^{\text{th}}$ position of the product.

Each iteration of the IPFP procedure therefore only involves $(2K)$ 2-coupling matrices multiplications and only requires 
storing $K$ vectors and the 2-coupling cost matrices  $\xi^0$ and  $\xi^1$. 
The computation of the 2-coupling maps \eqref{2coupl} can be simplified with the same remark. 
\end{rem}

Figures~\ref{fig:mm1}, \ref{fig:mm2} and~\ref{fig:mm3} show $T_{1,k}$ for three test cases in dimension $d=1$ proposed in~\cite{BrenierGeneralized}. The computation is performed with a uniform discretization $(x_i)_i$ of $[0,1]$ with $N=200$ points, $\ga=10^{-3}$ and $K=16$. They agree with the solutions produced by Brenier and the mass spreading of the generalized flow is nicely captured by the 2 marginals couplings~\eqref{2coupl}.

\newcommand{\MyFigEuler}[2]{\imgbox{\includegraphics[width=.19\linewidth]{multimarginal/EulerT#1/EulerT#1timeStep#2}}}

\begin{figure}[h!]
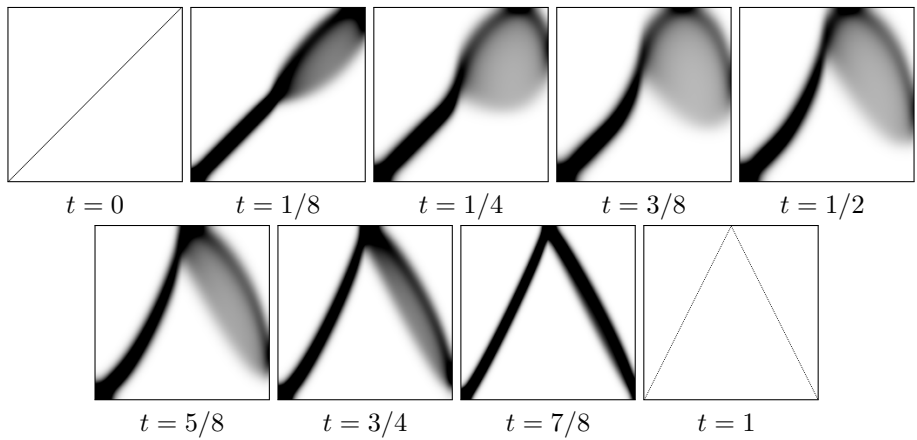

	\centering
	\TabFive{
		\MyFigEuler{1}{1} &
		\MyFigEuler{1}{2} &
		\MyFigEuler{1}{3} &
		\MyFigEuler{1}{4} &
		\MyFigEuler{1}{5} \\
		$t=0$ & $t=1/8$ & $t=1/4$ & $t=3/8$ & $t=1/2$
	}
	\TabFour{
		\MyFigEuler{1}{6} &
		\MyFigEuler{1}{7} &
		\MyFigEuler{1}{8} &
		\MyFigEuler{1}{9} \\
		$t=5/8$ & $t=3/4$ & $t=7/8$ & $t=1$
	}
	\caption{%
		Display of $T_{1,k}$ showing the evolution of the fluid particles from $x$ to $\Xi(x)=\min(2x,2-2x)$ for $x \in [0,1]$. The corresponding time is $t=\frac{k-1}{K-1} \in [0,1]$.
	}
   \label{fig:mm1}
\end{figure}

\begin{figure}[h!]
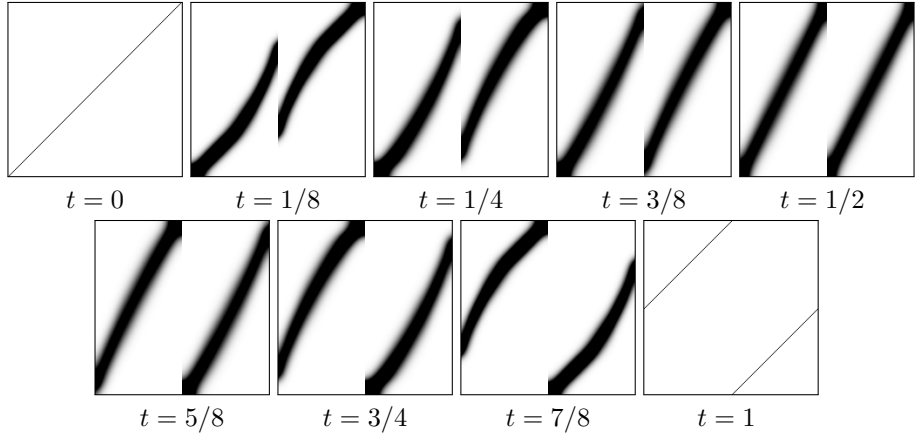

	\centering
		\TabFive{
		\MyFigEuler{2}{1} &
		\MyFigEuler{2}{2} &
		\MyFigEuler{2}{3} &
		\MyFigEuler{2}{4} &
		\MyFigEuler{2}{5} \\
		$t=0$ & $t=1/8$ & $t=1/4$ & $t=3/8$ & $t=1/2$
	}
	\TabFour{
		\MyFigEuler{2}{6} &
		\MyFigEuler{2}{7} &
		\MyFigEuler{2}{8} &
		\MyFigEuler{2}{9} \\
		$t=5/8$ & $t=3/4$ & $t=7/8$ & $t=1$
	}
	\caption{%
		Same as Figure~\ref{fig:mm1} for the map $\Xi(x)=(x+1/2) \text{ mod } 1$ for $x\in [0,1]$. 
	}
   \label{fig:mm2}
\end{figure}

\begin{figure}[h!]
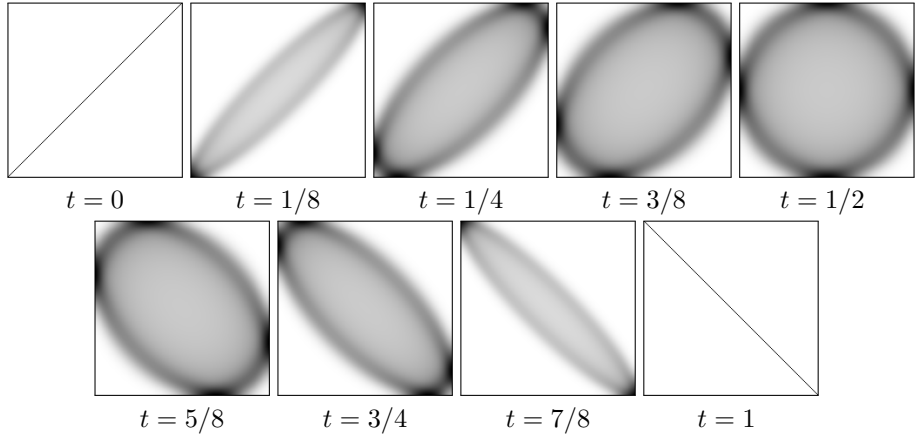

	\centering
		\TabFive{
		\MyFigEuler{3}{1} &
		\MyFigEuler{3}{2} &
		\MyFigEuler{3}{3} &
		\MyFigEuler{3}{4} &
		\MyFigEuler{3}{5} \\
		$t=0$ & $t=1/8$ & $t=1/4$ & $t=3/8$ & $t=1/2$
	}
	\TabFour{
		\MyFigEuler{3}{6} &
		\MyFigEuler{3}{7} &
		\MyFigEuler{3}{8} &
		\MyFigEuler{3}{9} \\
		$t=5/8$ & $t=3/4$ & $t=7/8$ & $t=1$
	}
	\caption{%
		Same as Figure~\ref{fig:mm1} for the map $\Xi(x)=1-x$ for $x\in [0,1]$. 
	}
   \label{fig:mm3}
\end{figure}


\section{Transport Problems with Ine\-quali\-ty Cons\-traints}

In this section, we consider transport problems with inequality constraints. Again we have to project for the KL divergence on the intersection of convex sets of nonnegative vectors.

\subsection{Partial Transport}
\label{sec-partial-ot}

In the partial transport problem, one is given two marginals $(p,q) \in (\RR_+^N)^2$, not necessarily with the same total mass. 
We wish  to transport only  a given fraction of mass 
\eq{
	m \in [0,\min( p^T\ones, q^T\ones )],  
}
minimizing the transportation cost $\dotp{C}{\pi}$ where $C \in (\RR_+)^{N \times N}$ is the ground cost. 

The corresponding regularized problem reads
\eql{\label{partialdiscrete}
	\min_{\pi\in \RR_+^{N\times N}}  
	\enscond{ 
			\dotp{C}{\pi} - \ga E(\pi)
		}{ 
			\pi \ones \leq p, \; 
			\transp{\pi} \ones \leq q, 
			\transp{\ones} \pi \ones = m 
		}
}
where the inequalities should be understood component-wise.

Similarly to~\eqref{eq-regul-ot-kl}, this is equivalent to computing the projection of $\xi=e^{-\frac{C}{\ga}}$ on the intersection $\Cc_1 \cap \Cc_2 \cap \Cc_3$ of $K=3$ convex sets where
\eql{\label{defdesCkpartial}
	\Cc_1 \eqdef \enscond{ \pi }{ \pi \ones \leq p }, \quad
	\Cc_2 \eqdef \enscond{ \pi }{ \transp{\pi} \ones \leq q }, \quad
	\Cc_3 \eqdef \enscond{  \pi }{ \transp{\ones} \pi \ones = m  }.
}
The following proposition shows that the KL projection onto those three sets can be obtained in closed form. 

\begin{prop}
Let   $\pi\in \RR_{+}^{N \times N}$. Denoting $\pi^k \eqdef \KLproj_{\Cc_k}( \pi)$ for $k \in \{1, 2, 3\}$ where $\Cc_k$ is defined by \eqref{defdesCkpartial}, one has
\begin{align*}
	\pi^1 &= \diag\pa{ \min\pa{ \frac{p}{\pi \ones}, \ones} } \pi, \\
	\pi^2 &= \pi \diag\pa{ \min\pa{ \frac{q}{\transp{\pi} \ones}, \ones} }, \\
	\pi^3 &= \pi \frac{m}{\transp{\ones} \pi \ones}, 
\end{align*}
where the minimum is component-wise.
\end{prop}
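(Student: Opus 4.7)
The plan is to derive each projection formula independently via its Karush--Kuhn--Tucker (KKT) conditions, relying on the fact that the KL objective $\sigma \mapsto \KLdiv{\sigma}{\pi}$ is strictly convex on $\RR_+^{N\times N}$ (with the $0\log 0 = 0$ convention), so each of the three problems has a unique minimizer completely characterized by first-order optimality.

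For $\Cc_1 = \{\sigma : \sigma \ones \leq p\}$, I would introduce nonnegative Lagrange multipliers $\alpha_i \geq 0$ for the $N$ inequalities $\sum_j \sigma_{ij} \leq p_i$. Differentiating $\sum_{i,j}\sigma_{ij}(\log(\sigma_{ij}/\pi_{ij})-1) + \sum_i \alpha_i(\sum_j \sigma_{ij} - p_i)$ in $\sigma_{ij}$ yields $\log(\sigma_{ij}/\pi_{ij}) + \alpha_i = 0$, so $\sigma_{ij} = \pi_{ij} e^{-\alpha_i}$, meaning row $i$ is uniformly rescaled by a factor $t_i := e^{-\alpha_i} \in (0,1]$. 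Complementary slackness then forces a dichotomy per row: either $\alpha_i = 0$ (so $t_i = 1$ and the row stays unchanged) or the constraint is active ($t_i \sum_j \pi_{ij} = p_i$, so $t_i = p_i/(\pi\ones)_i$). A quick case analysis shows the two cases combine into $t_i = \min(1, p_i/(\pi\ones)_i)$, which is the claimed formula. The projection on $\Cc_2$ is identical, applied to columns; alternatively one can appeal to the symmetry $\KLdiv{\sigma^T}{\pi^T} = \KLdiv{\sigma}{\pi}$ and reuse the $\Cc_1$ argument.

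For $\Cc_3 = \{\sigma : \ones^T \sigma \ones = m\}$, I would use a single real multiplier $\beta$ for the scalar equality constraint. The first-order condition $\log(\sigma_{ij}/\pi_{ij}) + \beta = 0$ gives $\sigma_{ij} = \pi_{ij} e^{-\beta}$, i.e.\ a single global rescaling of $\pi$; enforcing $\ones^T \sigma \ones = m$ pins down $e^{-\beta} = m/(\ones^T \pi \ones)$.

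The only genuinely delicate part is the inequality case, where one has to justify invoking KKT (a qualification such as Slater's condition, which is immediate since any strictly positive $\sigma$ with small enough entries lies in the interior of $\Cc_1$ or $\Cc_2$) and has to argue that the positivity of $\alpha_i$ together with the primal constraint indeed selects the $\min$ and never the other branch. Everything else is essentially routine differentiation and substitution.
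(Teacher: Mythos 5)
Your derivation is correct; the paper in fact states this proposition without any proof, and your KKT argument (row-wise rescaling $\sigma_{ij}=\pi_{ij}e^{-\alpha_i}$ with $e^{-\alpha_i}\in(0,1]$, the complementary-slackness dichotomy collapsing to a component-wise $\min$, and a single scalar multiplier for the total-mass constraint) is precisely the computation the authors leave implicit, in the same Lagrangian first-order-condition style they use to prove their barycenter projection proposition. No gaps worth flagging.
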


Since the considered sets $\Cc_1$ and $\Cc_2$ are convex but not affine, one thus needs to use Dykstra iterations~\eqref{eq-iter-dystra} which are ensured to converge to the solution of~\eqref{partialdiscrete}.

If $\pi^\star$ is the optimal solution of~\eqref{partialdiscrete} and 
\eq{
	p_m \eqdef \pi^\star \ones
	\qandq
	q_m \eqdef \pi^{\star,T} \ones
} 
are  its marginals, then we define the active source $\Ss_m$ and the active target $\Tt_m$ regions as follow
\begin{align*}
     \Ss_m &\eqdef \enscond{x_i }{  (p_m)_i/m \geq \eta }, \\
     \Tt_m &\eqdef \enscond{x_i }{  (q_m)_i/m \geq \eta },
\end{align*}
where $\eta>0$ is a threshold we use to detect the region, namely the active region, where the transported mass is concentrated.  

The continuous partial optimal transport problem has been studied in Caf\-far\-el\-li-McCann \cite{MR2630054} and 
Figalli \cite{MR2592287}. They show in particular that if there exists an hyperplane separating the support of the 
two marginals then the  ``active region" is separated from the ``inactive region" by a free boundary which can be parameterized as a semi concave graph over the
separating hyperplane. This can be observed on the test case presented in Figure~\ref{fig:pt1}.
The computation is performed on an uniform 2D-grid of $N=256\times256$ points in $[0,1]^{2}$, $\ga=10^{-3}$ and $m=0.7\min( \dotp{p}{\ones}, \dotp{q}{\ones} )$.
\begin{figure}[htdp]
	\centering
	\begin{tabular}{@{}c@{\hspace{1mm}}c@{\hspace{1mm}}c@{}}
		\imgbox{\includegraphics[width=.3\textwidth]{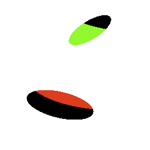}} & 
		\imgbox{\includegraphics[width=.3\textwidth]{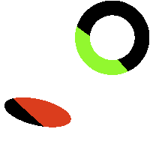}} & 
		\imgbox{\includegraphics[width=.3\textwidth]{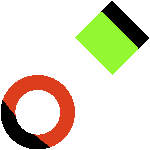}} 
	\end{tabular}
  	\caption{ 		 
		The red region is the active source $\Ss_m$, the green region is the active target $\Tt_m$ and the black ones are the inactive regions.
	} 
  	\label{fig:pt1}
\end{figure}



\subsection{Capacity Constrained Transport}
\label{sec-capacity-ot}

Korman and McCann proposed and studied in~\cite{km1,km2} a variant of the classical OT problem when there is an upper bound on the coupling weights so as to capture transport capacity constraints. 

The capacity is described by $\th \in (\RR_+)^{N \times N}$, where $\th_{i,j}$ is the maximum possible mass that can be transferred from $i$ to $j$. The corresponding regularized problem reads, for a ground cost $C \in (\RR_+)^{N \times N}$ and marginals $(p,q) \in (\RR_+^N)^2$, 
\eql{\label{constraineddiscrete}
	\min_{\pi\in \RR_+^{N\times N}}  
		\enscond{
			\dotp{C}{\pi} - \ga E(\pi)
		}{
			\pi\ones = p, \; 	
			\transp{\pi}\ones = q, \; 
			\pi \leq \th
		}
}
where the inequalities should be understood component-wise.

This problem is equivalent to a KL projection problem of type~\eqref{proj-inter} with $K=3$ convex sets and
\eql{\label{defdesCkconstrained}
	\Cc_1 \eqdef \enscond{ \pi }{ \pi\ones = p }, \quad
	\Cc_2 \eqdef \enscond{ \pi }{ \transp{\pi}\ones = q },   \quad	
	\Cc_3 \eqdef \enscond{ \pi }{ \pi \leq \th }.
}
The projection on $\Cc_1$ and $\Cc_2$ is given by Proposition~\ref{prop-projkl-row-cols}. The projection on $\Cc_3$ is simply 
\eq{
	\KLproj_{\Cc_3}(\pi) = \min(\pi, \theta)
}
where the minimum is component-wise.


Korman and McCann~\cite{km2} established several interesting properties of minimizers in the  continuous setting. In particular, they proved  (theorem 3.3) that optimal plans must saturate the constraint $\Cc_3$: 
the optimal $\pi$  is of the form $\th\, 1_W$, where $1_W$ is the characteristic function of a subset $W$ of $\RR^d \times \RR^d$.
They also prove in lemma 4.1 \cite{km2} symmetries properties between minimizers  $\pi^\star$, $\tilde\pi^{\star}$ with the same marginals but  different capacity constraints  $\th$ and $\tilde\th$  which are H\"older conjugate, i.e. $\frac{1}{\th} + \frac{1}{\tilde\th} = 1$. More precisely, assuming for simplicity that the marginals are symmetric with respect to 0, they show that 
\begin{equation} 
	\label{sym}
	\pi^\star  = \th\, 1_W   \iff
	\tilde\pi^\star  = \tilde\th\, 1_{R( \Omega \setminus W)}
\end{equation} 
 where $R(x,y) = (x,-y)$ is the symmetry with respect to the second marginal axis and $W$ the optimal support of the saturated constraint 
 $\Cc_3$.

\newcommand{\figCapacity}[1]{\imgbox{\includegraphics[width=.28\textwidth]{capacity/#1}}}

\begin{figure}[htdp]
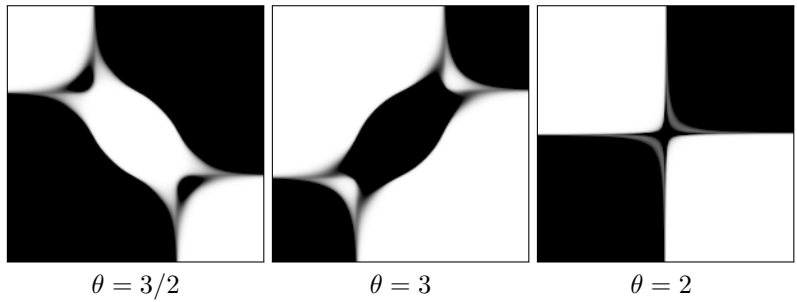

	\centering
	\TabThree{
		\figCapacity{Constrained15} & 
		\figCapacity{Constrained3} & 
		\figCapacity{Constrained2} \\
		$\theta=3/2$ & $\theta=3$ & $\th=2$
	}
  	\caption{ 		 
		Comparison of optimal coupling $\pi^\star$ for different values of $\th$. 		
	} 
  	\label{fig:cc1}
\end{figure}


\begin{figure}[htdp]
	\centering
	\TabThree{	
		\figCapacity{Constrained2D151} &
		\figCapacity{Constrained2D152} &
		\figCapacity{Constrained2D153} \\		
		\figCapacity{Constrained2D31} &
		\figCapacity{Constrained2D32} &
		\figCapacity{Constrained2D33}  \\
		$(i_2,j_2)=(\frac{\sqrt{N}}{4},\frac{\sqrt{N}}{4})$ &
		$(i_2,j_2)=(\frac{\sqrt{N}}{2},\frac{\sqrt{N}}{2})$ &
		$(i_2,j_2)=(\frac{3\sqrt{N}}{4},\frac{3\sqrt{N}}{4})$ \\
	}
  	\caption{ 
		2-D slices of the optimal coupling $\pi^\star$ 
		of the form $(\pi^\star_{(i_1,i_2)(j_1,j_2)})_{i_1,j_1}$, each time for 
		some fixed value of $(i_2,j_2) \in \{1,\ldots,\sqrt{N}\}^2$, 
		for $\th=3/2$ (top row) and $\th=3$ (bottom row).
	} 
  	\label{fig:cc3}
\end{figure}

Korman and McCann~\cite{km2} illustrated their theory with two 1-D numerical test cases (Figures 1 and 2 of~\cite{km2}) computed by linear programming and a discretization of the problem on a cartesian grid. We tested our method on the same tests. 
The ground cost is the standard quadratic distance $C_{i,j} = \norm{x_i-x_j}^2$, the marginals are discretizations of the uniform distribution on $[-1/2,1/2]$ using $N=100$ points $(x_i)_{i}$.
The simulation uses $\ga=10^{-3}$.
We reproduce the expected symmetries~\eqref{sym} in Figure~\ref{fig:cc1} in the 1-D test case for $\th=\frac{3}{2}$  and $\tilde\th=3$ and also for the self dual H\"older conjugate $\th=\tilde\th=2$.   


We also computed the solutions of similar test cases but this time in 2-D, which would be computationally too expensive to solve with linear programming methods.
The marginals $(p,q)$ are discretization of the uniform distribution on the square $[-1/2,1/2]^2$, discretized on a grid of $N=50 \times 50$ points $(x_i)_i$.
The simulation uses $\ga=10^{-3}$.
Figure~\ref{fig:cc3} shows some slices of the 4-D array representing the optimal transport plan $\pi^\star$, $\tilde\pi^\star$, illustrating the symmetries~\eqref{sym} in this setting.


\subsection{Multi-Marginal Partial Transport}

In~\cite{passkitagawa} Pass and Kitagawa studied the multi-marginal partial transport problem and, as a natural extension, the partial barycenter problem. Let us consider $K$ marginals $(p_k)_{k=1}^{K}$ and a transport plan $\pi \in \RR_{+}^{N^K}$.
We now combine the (regularized) partial optimal transport and the ``standard" multi-marginal problem, as described
in Sections~\ref{sec-partial-ot} and~\ref{subsec-multi-marginal-regularization} respectively. We obtain the following problem
\eql{\label{eq-partial-multimarginal-proj}
	\min_\pi \enscond{
		\KLdiv{\pi}{\xi} 
	}{
		\pi \in \Cc_1 \cap \Cc_2 \cap \ldots \cap \Cc_{K+1}
	}
	\qwhereq
	\xi \eqdef e^{-\frac{C}{\ga}}
}
where

\begin{align*}
	\Cc_k &\eqdef \enscond{ \pi \in \RR_+^{N^K} }{ S_k(\pi) \leq \km{p} }\quad k=1,\ldots,K, \\
	\Cc_{K+1} &\eqdef \enscond{ \pi \in \RR_+^{N^K} }{\textstyle \sum_{j}\pi_{j}=m},
\end{align*}
with $m \in [0,\min_k(\dotp{p_k}{\ones})]$

The KL projections on these convex sets are detailed in the following proposition.

\begin{prop}
	For any $k = 1,\ldots,K+1$, denoting $\pi^k = \KLproj_{\Cc_k}(\bar\pi)$, one has 
	\eq{
		\foralls k=1,\ldots,K, \; 
		\foralls j=(j_1,\ldots,j_K), \quad		
		\pi_{j}^k = \min\pa{ \frac{(\km{p})_{j_k}}{ S_k(\bar\pi)_{j_k} },1 } \bar\pi_{j} 
	}
	\eq{		
		\pi^{K+1} =\frac{m}{ \sum_{j}\bar\pi_{j} } \bar\pi. 
	}
\end{prop}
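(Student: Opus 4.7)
The plan is to derive both formulas from the KKT optimality conditions of the respective strictly convex programs $\min_{\pi\geq 0}\KLdiv{\pi}{\bar\pi}$ subject to the relevant constraint. Since $\KLdiv{\cdot}{\bar\pi}$ is strictly convex on its domain, the minimizer is unique in each case, and the first-order conditions are both necessary and sufficient. Both results are direct multi-marginal extensions of ideas already used in the paper: the first formula generalizes the partial-transport projection $\pi^1$ in the partial-transport proposition of Section~\ref{sec-partial-ot}, while the second is a straightforward scalar rescaling identical in spirit to the projection on $\Cc_3$ there.

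For $\Cc_k$ with $k\in\{1,\ldots,K\}$, I introduce a nonnegative Lagrange multiplier vector $u\in\RR_+^N$ attached to the entry-wise inequality $S_k(\pi)\leq p_k$. Stationarity with respect to $\pi_j$ (writing $j=(j_1,\ldots,j_K)$) yields
\eq{
  \log\pa{\frac{\pi_j}{\bar\pi_j}} + u_{j_k} = 0,
  \qquad\text{i.e.}\qquad
  \pi_j = \bar\pi_j\, e^{-u_{j_k}}.
}
Summing over all indices except $j_k=i$ gives $S_k(\pi)_i = e^{-u_i}\, S_k(\bar\pi)_i$. Then I invoke complementary slackness $u_i\,(S_k(\pi)_i - p_{k,i})=0$ together with primal feasibility $S_k(\pi)_i\leq p_{k,i}$ and dual feasibility $u_i\geq 0$: either $u_i=0$, in which case $e^{-u_i}=1$ and the constraint $S_k(\bar\pi)_i\leq p_{k,i}$ is already satisfied; or the constraint is saturated, giving $e^{-u_i}=p_{k,i}/S_k(\bar\pi)_i$ with this ratio necessarily $\leq 1$. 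The two cases combine into $e^{-u_i}=\min\bigl(p_{k,i}/S_k(\bar\pi)_i,\,1\bigr)$, which plugged into $\pi_j=\bar\pi_j e^{-u_{j_k}}$ is precisely the claimed formula.

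For $\Cc_{K+1}$, there is a single scalar equality constraint, so I use a single scalar multiplier $\nu\in\RR$. Stationarity gives $\log(\pi_j/\bar\pi_j)+\nu=0$ for every $j$, hence $\pi_j=\bar\pi_j e^{-\nu}$ with the same scalar $e^{-\nu}$ across all $j$. Imposing $\sum_j \pi_j=m$ fixes $e^{-\nu}=m/\sum_j \bar\pi_j$, yielding $\pi^{K+1}=(m/\sum_j \bar\pi_j)\,\bar\pi$. The only mildly delicate step in the whole argument is the complementary slackness case analysis in the $\Cc_k$ part; everything else is a rote verification of the KKT system.
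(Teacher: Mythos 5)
Your KKT derivation is correct: the stationarity condition $\pi_j=\bar\pi_j e^{-u_{j_k}}$, the slice-wise sum $S_k(\pi)_i=e^{-u_i}S_k(\bar\pi)_i$, and the complementary-slackness case analysis combine exactly into the stated $\min$ formula, and the scalar-multiplier argument for $\Cc_{K+1}$ is likewise sound. The paper states this proposition without proof, but your argument follows precisely the Lagrange-multiplier first-order-conditions template the authors use for their one written-out proof (the projection on $\Cc_2$ in the barycenter section) and specializes, for $K=2$, to their partial-transport formulas, so this is essentially the intended proof.
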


Once again the sets $\Cc_k$ are not affine, so  one  needs to use Dykstra iterations~\eqref{eq-iter-dystra}.

Figure~\ref{fig:pmm1} shows the results obtained when solving~\eqref{eq-partial-multimarginal-proj} with the same three marginals $(p_1,p_2,p_3)$ used in Figure~\ref{fig-barycenter-entropic}, using the cost 
\eql{\label{partial-quadratic-standard}
	C_{j_1,j_2,\ldots,j_K}  = \sum_{1 \leq s,t \leq K}\dfrac{1}{2} \norm{ x_{j_s}-x_{j_t} }^2, 
}
The computation is performed on an uniform 2D-grid of $N=60\times60$ points in $[0,1]^2$, $\ga=0.005$ and $m=0.7\min_k(\dotp{p_k}{\ones})$.

%

\newcommand{\figMultiPartial}[1]{\imgbox{\includegraphics[width=.4\textwidth]{multimarginal/PartialMultimarginal/#1}}}

\begin{figure}[htdp]
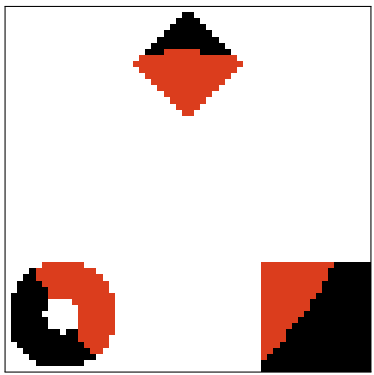

	\centering
		\figMultiPartial{PartialAnnulusDiamondSquare} 
%
  	\caption{
		Multi-marginal partial transport.
		The active regions are displayed in red. 
	} 
  	\label{fig:pmm1}
\end{figure}


\section*{Conclusion}

In this paper, we have presented a unifying framework to approximate solutions of various OT-related linear programs through entropic regularization. This regularization enables the use of simple, yet powerful, iterative KL projection methods. While the entropy penalization is not a competitor with interior point methods when it comes to accurately solve the initial linear program, it produces fast approximations at the expense of an extra smoothing. It is thus a method of choice for many applications such as machine learning, image processing or economics.

\section*{Aknowledgements}

We would like to thank Yann Brenier and Brendan Pass for stimulating discussions.
The work of G. Peyr\'e has been supported by the European Research Council (ERC project SIGMA-Vision).
JD. Benamou, G. Carlier and L. Nenna gratefully acknowledge the support of the ANR, through the project ISOTACE (ANR-12-MONU-0013) and INRIA through the ``action exploratoire" MOKAPLAN.
M. Cuturi gratefully acknowledges the support of JSPS young researcher A grant 26700002 and the gift of a K40 card from the NVIDIA corporation.

\bibliographystyle{plain}  
\bibliography{refs}

\end{document}